\def\pSkip{\vskip 1mm \noindent}
\def\Id{\Pi}
\def\htId{\widehat\Id}
\def\tI{\mathcal I}
\def\htv{\widehat v}
\def\htu{\widehat u}
\def\Pmax{{\max \{  p \ds | p \in P \} }}
\def\N{\mathbb N}
\def\Z{\mathbb Z}
\def\chw{\widecheck{w}}
\def\chx{\widecheck{x}}
\def\chy{\widecheck{y}}
\def\chC{\widecheck{C}}
\def\ver{\mathcal V}
\def\arc{\mathcal E}
\def\varX{\mathcal X}
\newcommand\factor[2]{#1 | #2}
\def\pre{\operatorname{pre}}
\def\suf{\operatorname{suf}}
\def\no{\kappa}
\def\wlen{\ell}
\def\cont{\operatorname{cont}}
\def\exp{\operatorname{exp}}
\def\wset{\mathcal W}
\def\tlw{\widetilde w}
\def\tlm{\widetilde m}
\newcommand\thmref[2]{\pSkip\textbf{Theorem #1. }\emph{#2}\vskip 1mm}
\def\X{{\underline{X}}}
\def\Y{{\underline{Y}}}
\def\x{{\underline{x}}}
\def\y{{\underline{y}}}
\newcommand\word[1]{W(#1)}
\newcommand\pres[2]{#1 \setminus _{\suf} #2}
\newcommand\sres[2]{#1 \setminus _{\pre} #2}
\newcommand\cset[1]{\langle #1 \rangle}
\def\set2{{\cset{2}}}
\def\setm{{\cset{m}}}
\def\setZ{{\cset{Z}}}
\def\setL{{\cset{L}}}
\def\setR{{\cset{R}}}
\def\setLp{{\cset{L'}}}
\def\setRp{{\cset{R'}}}
\def\setB{{\cset{B}}}
\def\setBp{{\cset{B'}}}
\def\setBpp{{\cset{B''}}}
\def\setF{{\cset{F}}}
\def\setG{{\cset{G}}}
\def\setH{{\cset{H}}}
\def\setU{{\cset{U}}}
\def\setV{{\cset{V}}}
\def\fib{\operatorname{F}}
\def\l2{x^2y^2x}
\def\ll2{yx^2y^2x}
\def\l3{xyxy^2x^2y}
\newcommand{\len}[1]{\operatorname{\ell}(#1)}
\newcommand{\ds}[1]{\ {#1} \ }
\newcommand{\dss}[1]{\quad {#1} \quad }
\def\pth{\gm}
\def\grph{G}
\def\tmu{\widetilde{\mu}}
\def\tgm{\widetilde{\gm}}
\def\hgm{\widehat{\gm}}
\def\one{\mathbb{1}}
\def\zero{\mathbb{0}}
\def\diag{\operatorname{diag}}
\def\dsim{\sim_{\diag}}
\def\mT{M_n(\Trop)}
\def\uT{U_n(\Trop)}
\def\lT{L_n(\Trop)}
\newtheorem{theorem}{Theorem}[section]
\newtheorem{lemma}[theorem]{Lemma}
\newtheorem{conjecture}[theorem]{Conjecture}
\newtheorem{notation}[theorem]{Notation}
\newtheorem{corollary}[theorem]{Corollary}
\newtheorem{example}[theorem]{Example}
\newtheorem{remark}[theorem]{Remark}
\newcommand {\junk}[1]{}
\newcommand{\etype}[1]{\renewcommand{\labelenumi}{(#1{enumi})}}
\def\eroman{\etype{\roman}}
\newcommand{\bfem}[1]{\textbf{\emph{#1}}}
\def\({\left(}
\def\){\right)}
\def\al{\alpha}
\def\gm{\gamma}
\def\minf{-\infty}
\def\Real{\mathbb R}
\def\Trop{\mathbb T}
\def\grph{G}
\def\minf{-\infty}
\def\MnT{M_n(\Trop)}
\def\UnT{U_n(\Trop)}
\def\LnT{L_n(\Trop)}
\title[Semigroup identities in the monoid of triangular tropical  matrices]
 {Semigroup identities in the monoid of \\[5pt]  triangular tropical  matrices}
\author{Zur Izhakian}
\address{
 Department of Mathematics, University of Bremen,
28359 Bremen, Germany.
        }
    \email{zzur@post.tau.ac.il;zzur@math.biu.ac.il}
 \thanks{\textbf{Acknowledgement.}
The author thanks Glenn Merlet for the useful conversation in CIRM, Luminy, September 2011.
% The author would like to thank the
%  referee for many valuable
%  comments and helpful suggestions.
 }
\subjclass[2010]{Primary:  20M05, 20M30, 47D03; Secondary: 16R10,
14T05. }
\date{\today}
\keywords{Tropical (max-plus) matrix algebra, idempotent
semirings, semigroup identities,  semigroup varieties,   monoid
representations.}
\begin{document}

\begin{abstract} We show that
the submonoid of all $n \times n$ triangular tropical  matrices
satisfies a nontrivial  semigroup identity and provide a generic
 construction for classes of such identities. The utilization of the Fibonacci number formula gives us an upper bound on the length of these $2$-variable semigroup identities.
\end{abstract}

\maketitle

%%%%%%%%%%%%%%%%%%%%%%%%%%%%%%%%% section %%%%%%%%%%%%%%%%%%%%%%%%%%%%%

% {\small \tableofcontents}

%%%%%%%%%%%%%%%%%%%%%%%%%%%%%%%%% section %%%%%%%%%%%%%%%%%%%%%%%%%%%%%

\section{Introduction}
\numberwithin{equation}{section}

 Varieties in the classical theory are customarily determined as the solutions of
 systems of equations. The ``weak" nature of semigroups, i.e., lack of inverses, forces
the utilization of a  different approach than the familiar one, in
which semigroup identities simulate the role of equations in the
classical theory. These semigroup identities are at the heart of the theory
of semigroup varieties \cite{SV}, and have been intensively
studied for many years. A new approach for studying these  semigroup identities has been provided by the use of
tropical algebra, as introduced in \cite{IzhakianMargolisIdentity}.

Tropical algebra is carried out over the tropical semiring
$\Trop\ := \Real\cup\{-\infty\}$ with the operations of maximum
and summation (written in the standard algebraic way),
$$a +  b :=\max\{a,b\},\qquad a b :=a \underset{\operatorname{sum}}{+}b,$$
serving respectively as addition and multiplication
\cite{IMS,pin98,RST}. This semiring is an additively idempotent
semiring, i.e., $a + a = a$ for every $a\in \Trop$, in which
$\zero := -\infty$ is the zero element and $\one := 0$ is the
multiplicative  unit.

As shown in \cite{IzhakianMargolisIdentity} for the case of $2
\times 2$ tropical matrices, linear representations of semigroups
by matrices over idempotent semirings, and in particular over the
tropical semiring, establish  a useful machinery for identifying
and proving semigroup identities.
It has been also shown in  \cite{IzhakianMargolisIdentity} that
the monoid of $2 \times 2$ tropical matrices admits a nontrivial
semigroup identity. Therefore, aiming for a generalization of the
former results, it is natural to inquire about possible
nontrivial identities in the wider monoid of $n \times n$ tropical
matrices. These matrices essentially serve as the target for representing
semigroups, and thus enable representations and study of a larger
range of monoids and semigroups.

 In the present paper we deal with $n \times n$
triangular tropical matrices, improving the machinery introduced
in \cite{IzhakianMargolisIdentity} by bringing in the perspective
of graph theory. It is well known that graph theory, especially
the theory of direct graphs, is strongly related to tropical
matrices~\cite{ABG,Butk} and provides a powerful computational tool in
tropical matrix algebra;
% \cite{IR1}--\cite{IR4}
 the correspondence between tropical matrices and weighted digraphs is intensively used for proving the results of this paper. A background on the interplay between digraphs and tropical matrices is given in \S\ref{sec:mathGr}.

Before approaching tropical matrices, we first address semigroup identities in general (cf. \S\ref{sec:3}),
providing a generic construction of classes of identities that preserve certain required properties (in particular balancing), to be used in our further study.  The following refinement property is very useful for this study, especially for proving existence of identities:
\thmref{\ref{thm:refine}}{A semigroup  that satisfies an
 $n$-variable identity,  also satisfies a refined
$2$-variable identity with exponent set $\{1,2\}$.}
\noindent This refinement of semigroup  identities assists us to deal with tropical matrices by better utilizing their view  as digraphs.

The monoid $\mT$  of $n \times n$  matrices over the tropical
semiring plays, as one would expect, an important role both in
theoretical algebraic study and in applications to combinatorics, as well as in
semigroup representations and automata. In contrast to the case
of matrices over a field,
 we identify nontrivial semigroup identities, satisfied by the
submonoid $\uT$ (resp. $\lT$) of all
 upper (resp. lower)  $n \times n$  triangular tropical matrices.

 To simplify the exposition we open with a certain type of matrices, i.e., diagonally equivalent matrices,
 and have the following preliminary theorem:

\thmref{\ref{thm:simDia}}{Any two triangular tropical matrices $X,Y \in \uT$ having
the same diagonal satisfy the (nontrivial) identities:
\begin{equation}\label{eq:00} \tlw_{(C,P,n-1)} \ds X \tlw_{(C,P,n-1)} = \tlw_{(C,P,n-1)} \ds Y
\tlw_{(C,P,n-1)},
\end{equation}  where $\tlw_{(C,P,n-1)}$ is any word having as its factors all the words
 of length $n-1$ generated by $C = \{ X, Y\}$ of powers $P = \{ 1,2\}$, such that $\tlw_{(C,P,n-1)} \ds X \tlw_{(C,P,n-1)}$ and $\tlw_{(C,P,n-1)} \ds Y
\tlw_{(C,P,n-1)}$ are generated by $C$ and powers $P$.   (To be explained in the text below.) }

Using this result, basically proved by combinatorial arguments on
the associated (colored) weighted digraphs of products of tropical matrices, we obtain
the main result of the paper.
\thmref{\ref{thm:matU}}{The submonoid $\uT \subset \mT$ of upper
triangular tropical   matrices satisfies the nontrivial semigroup identities
\eqref{eq:00}, with $X = AB$ and $Y = BA$,  for any $A, B\in \uT$.
\ }
This theorem generalizes the identity of the submonoid $U_2(\Trop)$ of $2
\times 2$ triangular tropical matrices, introduced in
\cite[Theorem~3.6]{IzhakianMargolisIdentity}, which has been
utilized to serve as the target of a faithful (linear)
representation for the bicyclic monoid; the latter plays an important role in the study of semigroups. The use of this faithful representation, together with the identity admitted by $U_2(\Trop)$,  led in \cite{IzhakianMargolisIdentity} to an easy proof
of Adjan's identity of the bicyclic monoid  (see \cite{Adjan} for Adjan's original work).

The well known Fibonacci number formula provides us an easy way to compute an upper bound for the length of the $2$-variable semigroup identities discussed in this paper (cf. \S\ref{sec:5}). This upper bound can be improved further, depending on the structure of the identities.

In the past years, most of the theory of  semigroups and matrix
representations has been developed for matrices built over fields
or rings; the above results nicely demonstrate our new approach
to represent semigroups, much along the line of group
representations, attained by a ``direct" use of matrices, realized
as linear operators, but now taking place over semirings.

 The results of this
paper opens up the possibility of using representation theory over
the tropical semiring to study wider classes of semigroups and monoids and to prove their possible (minimal) semigroup identities.

%%%%%%%%%%%%%%%%%%%%%%%%%%%%%%%%% section %%%%%%%%%%%%%%%%%%%%%%%%%%%%%
\section{Background: Tropical matrices and weighted digraphs}\label{sec:mathGr}

 Recalling  that $\Trop$
is a semiring, then in the usual way, we have the semiring $\MnT$
of $n \times n$ matrices with entries in $\Trop$, whose addition
and multiplication are induced from $\Trop$ as in the familiar
matrix construction. The \bfem{unit} element $I$ of
 $\MnT$, is the matrix
with $\one = 0$ on the main diagonal and whose off-diagonal
entries are all $\zero = \minf$; the \bfem{zero} matrix is
$(\zero) =  \zero I$. Therefore, $\MnT$ is also a multiplicative
monoid, and in the sequel it is always referred to as a monoid.
Formally,  for any nonzero matrix $A \in \MnT$ we set $A^{0} :=
I$. A given matrix $A \in  \MnT$ with entries $a_{i,j}$ is written
as $A = (a_{i,j})$, $i,j = 1,\dots,n$. We denote by $\UnT$ (resp.
$\LnT$) the submonoid of $\MnT$ of all upper (resp. lower)
triangular tropical matrices.

Given two matrices $X = (x_{i,j})$ and  $Y = (y_{i,j})$ in $\MnT$ , we
write  \begin{equation}\label{eq:dig} X  \dsim Y
\dss{\Leftrightarrow}  x_{i,i} = y_{i,i}, \quad \text{for all } i
= 1,\dots , n,
\end{equation} and say that $X$ and $Y$  are
\textbf{diagonally equivalent} if \eqref{eq:dig} holds.
\begin{remark} It is
readily checked that $AB \dsim BA$ for any upper (or lower)
triangular matrices $A$ and $B$.
\end{remark}

The \textbf{associated weighted digraph} $\grph_A := (\ver, \arc)$
of an $n\times n$ tropical matrix $A = (a_{i,j})$ is defined to
have vertex set $\ver :=\{ 1, \dots, n\}$, and edge set $\arc$ having a directed edge
$(i,j) \in \arc$ from $i$ to $j$ (of \textbf{weight} $a_{i,j}$) whenever
$a_{i,j} \ne \zero$. A \textbf{path} $\gm$ is a sequence of edges
$(i_1, j_1), \dots, (i_m, j_m) $, with $j_{k} = i_{k+1}$ for every
$k = 1,\dots, m-1$. We write $\gm := \gm_{i,j}$ to indicate that
$\gm$ is a path from $i = i_1$ to $j=j_m$, and call $\gm_{i,s}$
(resp. $\gm_{s,j}$), where  $s= i_k$ and $1 < k < m,$ the
\textbf{prefix} (resp. \textbf{suffix}) of $ \gm_{i,j}$ if $
\gm_{i,j} = \gm_{i,s} \circ \gm_{s,j}$.

The \textbf{length} $\len{\pth}$ of a path $\pth$ is the number of
its edges. % We say the $\gm$ is an $\ell$-path if it is of length~$\ell$.
Formally, we consider also paths of length $0$, which we call
\textbf{empty paths}. The \textbf{weight} $w(\pth)$ of a path
$\pth$ is defined to be the tropical product of the weights of all
the edges $(i_k, j_k)$ composing $\pth$, counting repeated edges. The weight of an empty path is formally set to be $0$.

A path is \textbf{simple} if each vertex appears at most once.
(Accordingly,  an empty path is considered also as simple.) A path
that starts and ends at the same vertex is called a
\textbf{cycle}; an edge $\rho = (i,i)$ is called a
\textbf{self-loop}, or \textbf{loop} for short. We write
$(\rho)^k$ for the composition $\rho \circ \dots \circ \rho$ of a loop
$\rho$ repeated $k$ times, and call it a \textbf{multiloop}. The
notation $(\rho)^0$ is formal, and stands for an empty loop, which
can be realized as a vertex.

\begin{remark}\label{rmk:acyclic} When a matrix $A$ is triangular, its associated digraph $G_A$ is an
acyclic digraph, possibly with  loops. Since this paper concerns
only with triangular matrices, in what follows  we assume
\textbf{all graphs are acyclic digraphs}.

Given a path $\gm_{i,j}$ from $i$ to $j$ in an acyclic digraph
$G_A$, it contains a unique simple path from $i$ to~$j$, denoted
$\tgm_{i,j}$, where the remaining edges are all loops. Relabeling
the vertices of $G_A$, we may always assume that $i<j$ and thus
have $\ell(\tgm_{i,j}) \leq j-i$.
\end{remark}

It is well known that powers of a tropical matrix $A = (a_{i,j})$
correspond to paths of maximal weight in the associated digraph,
i.e., the $(i,j)$-entry of $A^m$ corresponds to the highest weight
of all the paths~$\gm_{i,j}$ from $i$ to $j$  of length $m$ in
$G_A$.

When dealing  with product $A_1 \cdots A_m$ of different $n \times n$ matrices
the situation becomes more complicated. Namely, we have to equip
the weighted edges $e_i \in \arc_i$ of each digraph $G_{A_i}$ with
a unique color, say $c_i$, and define the digraph
$$\grph_{A_1 \cdots A_m} := \bigcup G_{A_i},$$ whose vertex set is
$\{1, \dots, n \}$ and its edge set is the union of edge sets
$\arc_i$ of $G_{A_i} = (\ver, \arc_i)$ colored by the $c_i$'s,
where $i =1,\dots,m$. (Thus, $\grph_{A_1 \cdots A_m} $ could have
multiple edges, but with different colors.)  We called such a
weighted digraph a \textbf{colored digraph}.

 Then, having such coloring, the
$(i,j)$-entry of the matrix product $B = A_1 \cdots A_m$
corresponds to the highest weight of all colored paths $(i_1,
j_1), \dots,  (i_m, j_m) $ of length $m$ from $i = i_1$ to $j=
j_m$ in the digraph $\grph_{A_1 \cdots A_m}$, where each edge
$(i_k, j_k)$ has color $c_k$, $k =1, \dots,m$, i.e., every edge is
contributed uniquely by the associated digraph $G_{A_k}$ of $A_k$, respecting the color ordering.
We call such a path a \textbf{proper colored path}.

 In what follows when considering paths
in colored digraphs $\grph_{A_1 \cdots A_m}$, we always restrict
to those colored paths that respect the sequence of coloring $c_1,
\dots, c_m$, i.e., properly colored, determined by the product concatenation $A_1 \cdots
A_m$. (For this reason we often preserve the awkward notation
$\grph_{A_1 \cdots A_m}$ which records the product concatenation $A_1 \cdots
A_m$.)

\begin{notation}\label{nott:1}
Given a matrix product $B = A_1 \cdots A_m$, we  write $\setB$ to
indicate that $B$ is realized as a word ``restoring'' the product
concatenation $A_1 \cdots A_m$, and thus denote $\grph_{A_1 \cdots
A_m}$ as $\grph_\setB$, while $B$ denotes the result of the matrix product. We also write  $\setB = \setBp \setBpp$
for the product concatenation of  $\setBp = A_1 \cdots A_k$ and
$\setBpp = A_{k+1} \cdots A_m$, with $ 1 < k < m$.
\end{notation}

%A \textbf{deletion} $G \setminus X$  of a vertex subset $X \subset
%\ver$ from a digraph $G:=(\ver,\arc)$ is defined  as
%$$ G \setminus X := (\ver \setminus X, \arc'), \qquad \arc' := \{ (i,j) \in \arc  \ds  | i,j \notin X \}.$$
%In particular,  when $X = \{ i \}$ and $G_A$ is the associated
%digraph of an $n \times n$ matrix $A$, then $G \sm X$ corresponds
%to the $(n-1) \times (n-1)$ matrix obtained from $A$ be deleting
%the $i$'th row and the $i$'th column.
%
%\begin{remark} Given a path $\gm $ in a weighted colored
%digraph $\grph_{A_1 \cdots A_m}$ and a vertex $k \notin \gm$. The
%path $\gm$ is has the same weight and coloring as in the digraph
%$\grph_{A'_1 \cdots A'_m}$, where each  $A'_i$ stands for the
%matrices obtained from $A_i$ by deleting the $k$'th column and
%$k$'th row.
%\end{remark}
%

%\begin{proposition} $A^n B^n \dsim B^n A^n$ for any $A, B \in M_n(\Trop)$.
%\end{proposition}

%\begin{proof}

%\end{proof}

\section{Semigroup identities}\label{sec:3}

\subsection{Semigroup elements}
Assuming that  $S := (S, \cdot \ )$ is a multiplicative monoid  with identity element
$e_S$, we write $s^i$ for the  $s \cdot s \cdots s$ with $s$
repeated $i$ times and formally identify $s^0$ with $e_S$.

Let $\varX$ be a countably infinite set of ``variables" (or letters) $x_1, x_2,
x_3, \dots$, i.e., $\varX: = \{ x_i \ds : i \in \N \}$.  An
element $w$ of the free semigroup $\varX^{+}$ generated by $\varX$
is called a \textbf{word} (over $\varX$), written uniquely as
\begin{equation}\label{eq:word} w =  x_{i_1} ^{t_1} \cdots
x_{i_m}^{t_m} \in \varX^{+}, \qquad i_k \in \N, \ t_k \in
\N,
\end{equation}
where $ x_{i_k} \neq  x_{i_{k+1}}$ for every $k$.   We write
$\no_{x_i}(w)$ for the number of occurrences of the variable  $x_i \in \varX$ in the word
$w$. Then
$$\cont(w) := \{ x_i \in \varX \ds | \no_{x_i}(w) \geq 1\} $$ is called the
\textbf{content} of $w$ and $$\wlen(w) := \sum_{x_i \in \cont(w)}
\no_{x_i}(w)$$ is the \textbf{length} of $w$. A word $w \in
\varX^+$ is said to be \textbf{finite} if $\wlen(w)$ is finite. We
assume that the empty word, denoted  $e$, belongs to $\varX^+$ and
set $\wlen(e) = 0$. A word $w$ is called
\textbf{$k$-uniform} if each letter ${x_i \in \cont(w)}$ appears in $w$ exactly $k$ times, i.e., $\no_{x_i}(w) = k$ for all ${x_i \in \cont(w)}$. We say that $w$ is \textbf{uniform} if it is $k$-uniform for some $k$.

We say that $w_2 \in \varX^+$ is a \textbf{factor} of a word $w\in
\varX^+$, written $\factor {w_2}{ w}$, if $w = w_1 w_2 w_3$ for
some $w_1, w_3 \in \varX^+$. When $w =
w_1 w_2$, % with $w_1, w_2 \neq e$,
we call the factors $w_1$ and $w_2$ respectively the
\textbf{prefix} and \textbf{suffix} of ~$w$, denoted as $\pre(w)$
and $\suf(w)$. Given a word  $w \in \varX^+$ we write
$\pre_{x_i}(w)$ (resp. $\suf_{x_i}(w)$) for the prefix (resp.
suffix) of $w$ of maximal length that consists only the variable
$x_i$, in particular, when $w \neq e$, $\pre_{x_i}(w) = x_i^{j_i}$ for some $x_i$ and $j_i \in
\Z_+$.

A word $u$  is a \textbf{subword} of $v$, written $u \subseteq v$,
if $v$ can be written as $v = w_0 u_{1} w_1 u_{2} w_2 \cdots u_{m}
w_m$ where $u_i$ and $w_i$ are words (possibly empty) such that $
u = u_1u_2 \cdots u_m $, i.e., the $u_i$ are factors of $u$. Clearly, any factor of $v$ is also a subword, but not conversely.

Given a finite subset $P \subset \N$, we define the ``down closure'' of $P$ to be
$$\overline{P} := \{ t \in \N \ds | t \leq p \text{ for some } p \in P\}. $$
The \textbf{exponent set} $\exp(w)$  of a word $ w$ of the Form
\eqref{eq:word} is defined as
$$ \exp(w) := \{ t_k \ds
| t_k > 0 \}.$$
In this paper we always assume all words  are  finite;
 thus $|\exp(w)|$ is finite for any word $w$. When $\overline{\exp(w)} = \exp (w) = \{ 1, \dots, m
\} $ we say that $w$ is a word \textbf{of exponent~$\setm$}.

\pSkip
\emph{Henceforth, we always assume that $P = \overline{P} \subset \N$ is a nonempty subset of the form}  $$ P := \{1,\dots,m \} \dss{ and \ that } n \geq   m = \Pmax \ .$$
\pSkip

Given finite nonempty subsets $C \subseteq \varX$ and  $P  \subset \N$, for any $n \geq  m $, $n \in \N$,  we define
$$ \wset_n [C,P] := \{ w \in \varX^+ \ds | \cont(w) \subseteq C , \ \exp(w) \subseteq P , \ \wlen(w) = n \},$$
in particular $\wset_n [C,P]  \subset C^+.$

We denote by $\tlw_{(C,P,n)}$  a word in $C^+$ for which
every  member of $\wset_n [C,P]$ is a factor, i.e.,
\begin{equation}\label{eq:uniword}
\text{ $\tlw_{(C,P,n)} \in C^+$ such that  $\factor {u} {
\tlw_{(C,P,n)}}$ for every  $u \in \wset_n [C,P]$.}
\end{equation}
We call $\tlw_{(C,P,n)}$ an $n$-\textbf{power word} of $C$ and
$P$. We say that $\tlw_{(C,P,n)}$  is \textbf{faithful} if
 $$\cont(\tlw_{(C,P,n)}) =C \dss{\text{and}}
\exp(\tlw_{(C,P,n)}) = P \ .$$  Note that $\wlen(\tlw_{(C,P,n)}) \geq n$, while for  $|C| > 1$ we have $\wlen(\tlw_{(C,P,n)}) > n$.  When $|C| = 1$, say $C = \{ x_i\}$, then
$ x_i^t$ is an  $n$-power word for any $t \geq n$, and $ x_i^t$ is faithful only if  $t = n = \Pmax$. \pSkip

\begin{example}\label{exmp:2-word}
Suppose $C = \{ x, y \} $, $P = \{ 1, 2 \} $.
\begin{enumerate} \eroman
  \item  When  $n =2$ we have the set $$
\wset_2 [C,P] = \{ x^2, \ xy, \ yx, \ y^2\}
\subset C^+,
$$ for which
$$ \tlw_{(C,P,2)} = x^2y^2x$$
 is a faithful $2$-power (nonuniform) word of  $C$ and $P$  of  length $5$.  \pSkip

  \item If  $n=3$ we get the set

 $$
\wset_3 [C,P] = \{ x^2y, \ xyx, \ xy^2, \ yx^2, \ yxy, \ y^2x\}
\subset C^+,
$$ for which
$$ \tlw_{(C,P,3)} = \l3 $$
 is a faithful $3$-power uniform word of  $C$ and $P$ of  length $8$.
\end{enumerate}

\end{example}

\begin{remark}\label{rmk:extenPW} Given an $n$-power word $\tlw_{(C,P,n)}$ of $C$ and
$P$, it is easy to see that for any $w_1, w_2 \in
C^+$, the word of the form $\tlw'_{(C,P,n)} = w_1
\tlw_{(C,P,n)} w_2$ is also an $n$-power word of $C$ and $P$. Therefore, taking appropriate  $w_1$ and $w_2$, $\tlw_{(C,P,n)}$ can be extended to a uniform $n$-power word. Similar extension can be performed for faithful $n$-power words, preserving their faithfulness.
\end{remark}

\pSkip
\emph{In what follows, we always work with $n$-power words $\tlw_{(C,P,n)}$ which are faithful and with  $|C| > 1.$
}
\pSkip

An $n$-power word $\tlw_{(C,P,n)}$ of $C$ and $P$ is called a
\textbf{minimal $n$-power word} if $\ell(\tlw_{(C,P,n)}) \leq
\ell(\tlw'_{(C,P,n)})$ for any $n$-power word $\tlw'_{(C,P,n)} \in
C^+$.
\begin{example}
The power words in Example \ref{exmp:2-word} are minimal power words.
\end{example}
\begin{remark}\label{rmk:simplePathWords}

Given a word $w \in \varX^+$, we may consider  $\varX$ to be a set
of generic matrices $A_1, A_2, \dots $. Using
Notation~\ref{nott:1}, the word $w = A_{i_1} \cdots A_{i_m}$ can
be realized as a  product concatenation of matrices and thus can  be
written equivalently as $\setB = A_{i_1} \cdots A_{i_m}$, where the result of the matrix product is  $B = (b_{i,j})$. Then every entry
$b_{i,j}$ of $B$ corresponds to a proper colored path $\gm_{i,j}$ from
$i$ to $j$ of length $m$ in the digraph $\grph_\setB,$ cf.~
\S\ref{sec:mathGr}. Thus, the proper coloring of all paths of
length $m$ is uniquely determined by $w$. Conversely, given a properly
colored path $\gm_{i,j}$ of length $\ell(w)$ in $\grph_\setB$, one
can recover the word $w$ from the coloring of the edges consisting
$\gm_{i,j}$.
\end{remark}

\subsection{Semigroup identities}\label{sec:3.2}
A (nontrivial) \textbf{semigroup identity} is a formal equality
 of the form  \begin{equation}\label{eq:Id}
\Id : u=v,
\end{equation} where
%\begin{equation}\label{eq:words } u :=  x_{i_1} ^{t_1} \cdots
%x_{i_\ell}^{t_\ell} \quad \text \quad {and} \quad  v : =
%x_{j_1}^{s_1} \cdots  x_{j_m}^{s_m},  \qquad  t_k , s_\ell \in
%\mathbb Z_+,
%\end{equation}
$u$ and $v$ are two  different (finite) words of the Form
\eqref{eq:word} in the free semigroup $\varX^{+}$. For a monoid
identity, we allow $u$ and $v$ to be the empty word as well. We
discuss, for simplicity, semigroup identities, but minor changes
apply to monoid identities as well.

 A semigroup $S := (S, \cdot \;)$ satisfies the
semigroup identity~\eqref{eq:Id} if for every morphism $\phi
:\varX^{+} \rightarrow S$ one has $ \phi(u)= \phi(v)$.

\begin{remark}
Semigroup identities can be thought of as special case of
polynomial identities (PI's), namely as monomial identities.
\end{remark}

 We say that an identity $\Id: u =v $ is an
\textbf{$n$-variable identity} if $|\cont(u) \cup \cont(v)| = n$.
The \textbf{exponent set}, denoted  $\exp(\Id)$,  of $\Id$ is
defined to be $\exp(u) \cup \exp (v).$ An identity $\Id$ is  said to be
\textbf{balanced} if $ \no_{x_i}(u) = \no_{x_i}(v)$ for every $x_i
\in \varX$, and it is called \textbf{uniformly balanced} if furthermore~$u$ and~$v$ are $k$-uniform for some $k$.    We define the \textbf{length} $\ell(\Id)$  of $\Id$ to
be $\ell(\Id) :=  \max\{\ell(u), \ell(v)\}$. It is readily checked
that if $\Id$ is balanced, then $\ell(u) = \ell (v)$.

\begin{example} Let us give some very elementary examples of semigroup identities.
\begin{enumerate} \eroman
    \item A commutative semigroup  admits the $2$-variable uniformly balanced identity $\Id: xy =
    yx,$ whose exponent set  is ~$\exp(\Id) = \{1 \}$. \pSkip

    \item An idempotent semigroup admits the (non-balanced) $1$-variable identity $\Id : x^2 =
    x,$ whose content is $\cont(\Id) = \{ x \}$ and its exponent set  is $\exp(\Id) = \{1 , 2 \}$. \pSkip
    \item A virtually abelian  semigroup  admits the $2$-variable uniformly  balanced identity $\Id: x^n y^n =
    y^n x^n,$ whose exponent set  is $\exp (\Id) = \{ n \}$. \pSkip

     \item The $2$-variable identity $\Id: x^{i} y^j =
    y^j x^{i},$   whose exponent set  is $\exp (\Id) = \{ i,j \}$, is balanced but not uniformly balanced for nonzero $i \neq j$.
\end{enumerate}
\end{example}

An identity $\Pi : u =v$ is called a \textbf{minimal identity} of
the semigroup $S$ if $\ell(\Id) \leq \ell(\Id')$ for any
nontrivial identity $\Pi' : u' =v'$ of $S$.

\begin{remark} Let $
\tI$ be a set of semigroup identities. The set of all semigroups
satisfying every identity in $\tI$ is denoted by $V[\tI]$ and is
called the \textbf{variety of semigroups} defined by $\tI$. It is
easy to see that $V[\tI]$ is closed under subsemigroups,
homomorphic images, and direct products of its members. The famous
Theorem of Birkhoff says that conversely, any class of semigroups
closed under these three operations is of the form $V[\tI]$ for
some set of identities $\tI$.
\end{remark}

\subsection{Construction of semigroup identities}
Although the main part of this paper utilizes $2$-variable identities of exponent
$\set2$, for future study,  we present the construction of identities that are of our interest in full generality.

Given an $n$-power word  $\tlw_{(C,P,n)}$ as in
\eqref{eq:uniword}, with  $C = \{ x_{1}, \dots, x_{m}  \}$ and $P
= \{ t_1, \dots, t_j\}$, we aim to build a nontrivial balanced identity $\Pi : u =v$ of content $C$ and exponent set $P$. To preserve the exponent set $P$ for $\Pi$, if necessary, we first extend $\tlw_{(C,P,n)}$ to  $\tlw'_{(C,P,n)}$ (cf.  Remark \ref{rmk:extenPW}), and construct the words $u$ and $v$ such that $\tlw_{(C,P,n)}'$ is their prefix and suffix.

Let
$t_{\max} := \max \{ t_1, \dots, t_j\}$, $t_{\min} := \min \{ t_1,
\dots, t_j\}$, and let $d := t_{\max} - t_{\min}$.  We assume the
following: $$    |C| > 1, \quad
    |P| >1, \quad
    t_{\max} \geq 2 t_{\min} \ .$$

 Letting
$$z_1 :=  x_1^{t_{\min}} \cdots  x_m^{t_{\min}},  \qquad  z_2 := x_m^{t_{\min}} \cdots  x_1^{t_{\min}} \ , $$
we define the identity
\begin{equation}\label{eq:iduniv}
 \Id_{(C,P,n)}: \quad   \tlw'_{(C,P,n)} \ds {z_1} \tlw'_{(C,P,n)} \ds =  \tlw'_{(C,P,n)} \ds
 {z_2}
\tlw'_{(C,P,n)} ,
\end{equation}
 where $ \tlw'_{(C,P,n)} $ is defined as
\begin{equation}\label{eq:idunivRole}  \tlw'_{(C,P,n)}  : = w_1 \ds{ \tlw_{(C,P,n)} } w_2,
\end{equation}
with  $w_1$ and $w_2$ given as follows (letting  $\tlw := \tlw_{(C,P,n)}$, for short):
\begin{equation}\label{eq:idunivRole1} w_1 : =
\left \{  \begin{array}{ll}
        x_m &  \text{if }   \pre_{x_{1}}(\tlw) > d  \\[1mm]
        x_1 &  \text{if }   \pre_{x_{m}}(\tlw) > d  \\[1mm]
        e &  \text{otherwise} \end{array}\right. \qquad w_2 :=
 \left \{ \begin{array}{ll}
        x_m &  \text{if }   \suf_{x_{1}}(\tlw) >  d  \\[1mm]
        x_1 &  \text{if }   \suf_{x_{m}}(\tlw) > d  \\[1mm]
        e &  \text{otherwise} \\
          \end{array} \right.
\end{equation}
Clearly, by this construction,  $\Id_{(C,P,n)}$ is a
balanced identity.

In view of Remark~\ref{rmk:extenPW}, $\tlw'_{(C,P,n)}$ can be extended further to be uniform, which then makes  the identity~\eqref{eq:iduniv} uniformly balanced.
In particular when the given $n$-power word  $\tlw_{(C,P,n)}$ is uniform, we can instead explicitly define $w_1$ and $w_2$ in \eqref{eq:idunivRole} as
\begin{equation}\label{eq:idunivRole2} w_1 :=  \left \{  \begin{array}{ll}
        z_1 &  \text{if }   \pre_{x_{1}}(\tlw) > d  \\
        z_2 &  \text{if }   \pre_{x_{m}}(\tlw) > d  \\
        e &  \text{otherwise} \end{array}\right. \qquad w_2 : = \left \{ \begin{array}{ll}
        z_2 &  \text{if }   \suf_{x_{1}}(\tlw) >  d  \\
        z_1 &  \text{if }   \suf_{x_{m}}(\tlw) > d  \\
        e &  \text{otherwise} \\
          \end{array} \right.
\end{equation}
to obtain a uniformly balanced identity.

\begin{notation}\label{nott:2} Throughout this paper we use the notation $\x$ and $\y$ to mark specific instances
of the variables $x$ and $y$ in a given expression, although these notations stand for the same variables $x$ and $y$, respectively.
\end{notation}

\begin{example}\label{exmp:2-word:2}
Let  $C = \{x,y\}$ and $P = \{ 1,2\}$, and set $z_1 = \underline{x y}$, $z_2 = \underline{y x}$.
\begin{enumerate} \eroman
  \item
Starting with the $2$-power word $ \tlw_{(C,P,2)} = x^2y^2x$
of $C$ and $P$ given in
Example~\ref{exmp:2-word}.(i), by the rule of \eqref{eq:idunivRole1} we extend it to  $ \tlw'_{(C,P,2)} = \ll2$, a uniform word, and define the identity
\begin{equation}\label{eq:id2} \Id_{(C,P,2)}: \quad
    \ll2 \ds{\underline{xy}}  \ll2 \ds =  \ll2 \ds{\underline{yx}}
     \ll2 \ .
\end{equation}
This identity is  uniformly balanced.
\pSkip

  \item
Taking the uniform $3$-power word $ \tlw_{(C,P,3)} = \l3$
of $C$ and $P$ as in
Example~\ref{exmp:2-word}.(ii), we get the uniformly balanced identity
\begin{equation}\label{eq:id3} \Id_{(C,P,3)}: \quad
     \l3 \ds{\underline{xy}}  \l3 \ds =  \l3 \ds{\underline{yx}}
     \l3 \ .
\end{equation}
(In this case, by \eqref{eq:idunivRole1} there is no need for extension.)
\end{enumerate}

For both identities, $\Id_{(C,P,2)}$ and  $\Id_{(C,P,3)}$, we have
$\cont(\Id_{(C,P,n)}) = \cont(\tlw'_{(C,P,n)}) = \cont(\tlw_{(C,P,n)})$ and
$\exp(\Id_{(C,P,n)})  = \exp(\tlw'_{(C,P,n)}) = \exp(\tlw_{(C,P,n)}),$ for $n =2,3.$
\end{example}
\begin{theorem}\label{thm:refine} A semigroup  $S:= (S, \cdot \;)$ that satisfies an
 $n$-variable identity $\Id: u = v$, for $n \geq 2$,  also satisfies  a refined
$2$-variable identity $\htId : \htu =\htv$ of exponent $\set2.$
\end{theorem}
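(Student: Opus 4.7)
The plan is to invoke a substitution principle. Given any semigroup morphism $\phi : \varX^{+} \to \{x,y\}^{+}$ and any morphism $\psi : \{x,y\}^{+} \to S$, the composition $\psi \circ \phi$ is a morphism $\varX^{+} \to S$; since $S$ satisfies $\Id : u = v$, we have $\psi(\phi(u)) = \psi(\phi(v))$, so $S$ satisfies $\htId : \phi(u) = \phi(v)$ for every choice of $\phi$. After a harmless relabeling, write $\cont(\Id) \subseteq \{x_1,\dots,x_n\}$; the task then reduces to choosing $\phi$ so that $\htId$ is nontrivial, $2$-variable, and of exponent $\set2$.

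A convenient choice is $\phi(x_i) := (xy)^i x$ for $i = 1, \dots, n$. Each $\phi(x_i)$ is a strictly alternating word beginning and ending in $x$, so all its internal runs have length $1$. In any concatenation $\phi(x_{i_1}) \cdots \phi(x_{i_k})$, the only runs of length greater than $1$ are the $x^2$ blocks produced where the trailing $x$ of $\phi(x_{i_m})$ meets the leading $x$ of $\phi(x_{i_{m+1}})$; the letters flanking each such block are $y$, so no further merging can occur. Hence $\exp(\phi(w)) \subseteq \set2$ and $\cont(\phi(w)) = \{x,y\}$ for every nonempty $w$. If $\ell(u) = 1$ or $\ell(v) = 1$, first replace $\Id$ by the still-satisfied identity $x_1 u x_1 = x_1 v x_1$; each side then has at least one junction, forcing an $x^2$ block to appear and thereby $\exp(\htId) = \set2$ exactly.

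The crux is nontriviality of $\htId$, which amounts to injectivity of $\phi$ on $\varX^{+}$. For $w = x_{i_1} \cdots x_{i_k}$, the $k-1$ occurrences of the $x^2$ block in $\phi(w)$ mark precisely the junctions between successive images $\phi(x_{i_m})$, partitioning $\phi(w)$ into segments of respective lengths $2 i_1$ (before the first junction), $2 i_m - 1$ for $1 < m < k$ (between consecutive junctions), and $2 i_k$ (after the last junction). From these lengths the sequence $(i_1, \dots, i_k)$, and hence $w$, is recovered uniquely, so $u \neq v$ forces $\phi(u) \neq \phi(v)$, and $\htId$ is nontrivial. This decoding step is the main obstacle in the proof; the remaining points follow immediately from the substitution principle and the alternating structure of the images $\phi(x_i)$.
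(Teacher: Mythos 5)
Your proof is correct, but it takes a genuinely different route from the paper's. The paper partitions $\cont(\Id)$ into two nonempty classes $C_1 \,\dot\cup\, C_2$ and substitutes $y_1y_2$ for the variables of $C_1$ and $y_2y_1$ for those of $C_2$; you instead use the single encoding morphism $x_i \mapsto (xy)^i x$. Both arguments rest on the same substitution principle (an identity is inherited along any endomorphism of $\varX^{+}$), and both land in exponent set $\subseteq \{1,2\}$. What your choice buys is nontriviality of $\htId$: your decoding argument shows $\phi$ is injective on $\varX^{+}$ (the maximal $x^2$-runs mark the junctions, and the segment lengths recover the indices), so $u \neq v$ forces $\htu \neq \htv$. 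The paper's proof is silent on this point, and its substitution can in fact collapse the two sides for an unlucky partition --- e.g.\ $u = x_1x_2x_3$, $v = x_3x_2x_1$ with $C_1 = \{x_1,x_3\}$, $C_2 = \{x_2\}$ gives $\htu = \htv$ --- so strictly speaking the paper still owes an argument that some partition separates $u$ from $v$, which your construction sidesteps entirely. You also take care to pad the identity so that the exponent set is exactly $\set2$ rather than merely contained in $\{1,2\}$, another point the paper glosses over. The trade-off is length: the paper's refinement has length exactly $2\,\ell(\Id)$, whereas your images $(xy)^i x$ grow with the variable index, so the resulting $2$-variable identity is considerably longer; for the later quantitative discussion of identity length the paper's shorter refinement is preferable.
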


\begin{proof} Since  $S$ satisfies the $n$-variable identity
$\Id : u =v $, then by definition $\phi(u) = \phi(v)$ for  every
morphism $\phi :\varX^{+} \rightarrow S$. Suppose $ C: =
\cont(\Id) = \{x_1, \dots, x_n\}$, and write $C$ as the disjoint
union  $C = C_1 \ds { \dot \cup}  C_2$, for nonempty subsets $C_1$
and $C_2$. Pick two variables, say $y_1, y_2$,  and
consider the words $\htu$ and $\htv$, obtained respectively from
$u$ and $v$ by substituting $y_1 y_2$ for every $x_i \in C_1$ and
$y_2 y_1$ for every $x_j \in C_2.$

It is easy to verify that $ \htId : \htu = \htv$ is a $2$-variable
identity, with set of exponents $\exp(\htId)  \subseteq \{ 1,2\}.$ We claim
that $S$ satisfies the identity $\htId :  \htu = \htv$. Indeed,
assume $\phi : \varX^+ \to S$ sends $\phi : y_1 \mapsto s_1$ and $
\phi :y_2 \mapsto s_2$, then $\phi : y_1 y_2 \mapsto  s_1 s_2 =a $
and $\phi : y_2 y_1 \mapsto  s_2 s_1 =b$. But, since $a$ and $b$
satisfy $\Id$ by hypothesis, and $\htu$ and $\htv$ can be decomposed
as concatenation of the terms  $y_1 y_2$ and $y_2 y_1$, then $s_1$
and $s_2$ satisfy $\htId$.
\end{proof}

In the sequel, in view of Theorem \ref{thm:refine}, we focus on
$2$-variable identities $ \Id_{(C,P,n)}$ of exponent~
$\set2$ of the Form \eqref{eq:iduniv}, with $C := \{ x,y\}$ and $P
:= \{ 1,2\}.$  For ease of exposition, for a given $n$-power word $\tlw_{(C,P,n)}$ we begin with the 2-variables identity having
exponent set $\set2$ of the form
\begin{equation}\label{eq:iduniv2}
\Id_{(C,P,n)}: \quad   \tlw'_{(C,P,n)} \ds {\x}
\tlw'_{(C,P,n)} \ds =  \tlw'_{(C,P,n)} \ds
 {\y}
\tlw'_{(C,P,n)} ,
\end{equation}
% with $C := \{ x,y\}$ and $P
%:= \{ 1,2\},$
where here, using Remark \ref{rmk:extenPW}, $\tlw' :=
\tlw'_{(C,P,n)}$ is an extended $n$-power word, obtained by the
rule of Formula~ \eqref{eq:idunivRole1}; in particular,
$\pre_{x}(\tlw')$, $\pre_{y}(\tlw')$,  $\suf_{x}(\tlw')$,
$\suf_{y}(\tlw')$ are all $\leq 1$, which preserve the exponent~$\set2$ property of $\Id_{(C,P,n)}$. (Note that in comparison  to \eqref{eq:iduniv}, the intermediate terms $z_i$ are now consisting of only one letter.)

Clearly the identity $\eqref{eq:iduniv2}$ is
not balanced, however it  can be easily refined  by substituting
\begin{equation}\label{eq:subtitiue} \text {$x:= \chx \ds{\chy} $ \dss{and}  $y :=
\chy \ds{\chx}$}
\end{equation} to receive back balanced identity (of exponent $\set2$)
\begin{equation}\label{eq:iduniv3}
\widecheck{\Id}_{(\chC,P,n)}: \quad   {\chw'}_{(\chC,P,n)} \ds {\chx
\chy} \chw'_{(\chC,P,n)} \ds =  \chw'_{(\chC,P,n)} \ds
 {\chy \chx}
\chw'_{(\chC,P,n)},
\end{equation}
  with $\chC = \{ \chx,\chy\}$, $P = \{ 1,2 \}$, and  $\chw'_{(\chC,P,n)}$ is the word obtained from $\tlw'_{(C,P,n)}$ substitution \eqref{eq:iduniv2}. (Note that now $\tlw'_{(C,P,n)}$ need not be an $n$-power word of $\chC = \{ \chx,\chy\}$ and $P = \{ 1,2 \}$.)

% \subsection{}

\section{Identities of triangular tropical matrices}
Aiming to prove the existence  of a semigroup identity for the
monoid $ U_n(\Trop)$ of $n \times n$ triangular matrices, we start with the case of diagonally
equivalent matrices, which is easier to deal with; then we
generalize the results to the whole monoid  $ U_n(\Trop)$.

\begin{remark}
 Suppose $S = M_n(\Trop)$ is the monoid of all $n \times n$
tropical matrices, then  any semigroup identity $\Id : u =v$ that
$S$ admits is balanced. Indeed,  otherwise assume $\no_{x_i}(u)
\neq
    \no_{x_i}(v)$ for some $x_i$ and take morphism $\phi: x_j \mapsto I$
    for each $j \neq i $ (recall that $I$ is the identity matrix) and $\phi : x_i \mapsto \al I$ for some fixed $\al
    \neq
    \one$ to reach a contradiction.

    However, for an easy exploration, for the certain class of diagonally equivalent matrices,
    we first work with unbalanced identities for the Form  \eqref{eq:iduniv2} and then refine
    them to balanced identities as in~
    \eqref{eq:subtitiue}.
\end{remark}

When dealing with matrix identities, we sometimes denote generic matrices
(standing for variables $x, y, \dots$) by capital letters $X, Y,
\dots$, as well as the words they generate. To demonstrate the main idea
of our approach for proving existence of semigroups identities for tropical matrices, we first prove the existence of a semigroups identity admitted by the monoid of $2 \times 2 $ triangular tropical matrices.

\subsection{The monoid of $2 \times 2$ triangular tropical  matrices}
An explicit semigroup identity of the case of $2\times 2$ tropical matrices has been proven in
\cite{IzhakianMargolisIdentity} by using the machinery of Newton's
polytope, applied to generic matrices, which allows the
identification of different tropical polynomials describing the
same function. Essentially, this machinery transfers the
identification  problem to the realm of convex sets, reducing it
to a comparison of convex hulls. However, computing the
convex-hulls becomes  difficult in high dimensional cases, and it
is not easily applicable.

To demonstrate our new approach,  we bring a simpler proof of the
above case, based now on the colored paths in associated digraphs. This proof is given for an
explicit identity of the form \eqref{eq:iduniv2}.

\begin{theorem}\label{prop:2x2}
Any two matrices  $X,Y \in U_2(\Trop)$ such that  $X \dsim Y$
satisfy the identity
\begin{equation}\label{eq:explicit2X2} U := X Y \ \X \  X Y \  = \
X Y \ \Y \  X Y =: V
\end{equation}
of the Form  \eqref{eq:iduniv2}.
\end{theorem}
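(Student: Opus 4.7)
The plan is to exploit the colored-digraph interpretation from Section~\ref{sec:mathGr}. For $X,Y\in U_2(\Trop)$ with $X \dsim Y$, the shared diagonal determines loops of weights $a$ at vertex $1$ and $c$ at vertex $2$, while the only non-loop edge $(1,2)$ has weight $b$ in $\grph_X$ and $b'$ in $\grph_Y$. In $\grph_\setU$ (resp.\ $\grph_\setV$) the five positions of the word $\setU = XY\x XY$ (resp.\ $\setV = XY\y XY$) are recorded by colors $c_1,\dots,c_5$, and I would compare the two matrices entry-by-entry by comparing proper colored paths.

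First I would dispose of the easy entries. For any length-$m$ word in $\uT$, the $(1,1)$ and $(2,2)$ entries come from the unique proper colored paths that loop at $1$ (resp.\ $2$) throughout, giving $a^m$ and $c^m$, while the $(2,1)$ entry is $\minf$ since $\grph_X$ and $\grph_Y$ are acyclic with no edge from $2$ to $1$. Hence the matrices $U$ and $V$ agree automatically away from position $(1,2)$, and only that entry needs attention.

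Next I would enumerate the proper colored paths $\gm_{1,2}$ of length $5$. Any such path traverses the edge $(1,2)$ at exactly one step $k$, looping at $1$ for the preceding $k-1$ steps and at $2$ for the remaining $5-k$ steps; its weight is $a^{k-1}\beta_k c^{5-k}$, where $\beta_k \in \{b,b'\}$ is the $(1,2)$-weight of color $c_k$. Reading off the letters of $\setU$ and $\setV$ gives
$$(\beta^U_1,\dots,\beta^U_5) = (b,b',b,b,b'), \qquad (\beta^V_1,\dots,\beta^V_5) = (b,b',b',b,b'),$$
so the two candidate lists for $U_{1,2}$ and $V_{1,2}$ agree at positions $k\in\{1,2,4,5\}$ and differ only at $k=3$.

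The substantive step---and the main obstacle---is to absorb this single mismatch. Here I would invoke tropical convexity: the exponent triple $(2,1,2)$ of $a^2 b c^2$ is the convex combination $\tfrac23(3,1,1)+\tfrac13(0,1,4)$ of the exponent triples of $a^3 b c$ and $b c^4$, hence $a^2 b c^2 \leq \max(a^3 b c,\, b c^4)$, and both of those terms sit unchanged in the $V$-list. Symmetrically, $a^2 b' c^2 \leq \max(a b' c^3,\, a^4 b')$, and both of these lie in the $U$-list. Together these two bounds force $U_{1,2} = V_{1,2}$, completing the proof. This convex-combination trick---replacing a ``deviant'' path by two flanking paths whose $k$-th letters are common to both sides---is exactly the mechanism I expect to reuse, with more elaborate bookkeeping, for the general $n \times n$ case of Theorem~\ref{thm:simDia}.
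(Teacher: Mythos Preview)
Your proof is correct and follows essentially the same route as the paper's: both arguments reduce to the single inequality $a^2\beta c^2 \leq \max(a^{3}\beta c,\ \beta c^{4})$ (and its mirror), which you phrase as convexity of the exponent vector $(2,1,2)$ between $(3,1,1)$ and $(0,1,4)$, while the paper phrases it as a contradiction-to-maximality argument on the loop weights $w(\rho_1),w(\rho_2)$. The only cosmetic difference is that you enumerate all five paths and absorb the mismatched $k=3$ term explicitly, whereas the paper starts from a putative maximal path with the crossing at position~$3$ and ``shifts'' it; the underlying inequality and its role are identical.
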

\begin{proof}
Write $U = (u_{i,j})$ and $V = (v_{i,j})$.    The equality $u_{i,i}
= v_{i,i}$ is obvious for the diagonal entries.
 Consider the $(1,2)$-entry, say of the matrix product  $U$, this entry
 corresponds to a colored path $\gm_{1,2}$ from $1$ to $2$ of
maximal weight  in the digraph $G_{X Y \X X Y} \ (= G_\setU )$.
Clearly, $\gm_{1,2}$  is of length $5$ and it contains a simple
path $\tgm_{1,2} \subset \gm_{1,2}$ of length $1$; namely the simple path $\tgm_{1,2}$  is an
edge. Let $e_\X$ be the edge in $\gm_{1,2}$ contributed by $G_\X$ (or equivalently by $\X$).
If $e_\X$ is a loop, we are done since $e_\X$ can be replaced by
$e_\Y$ in $G_{X Y \Y X Y} \ (= G_\setV)$, since $\X \dsim \Y$,
yielding a path of the same weight. Otherwise, by the same
argument, it is enough to show that $G_{X Y \X X Y}$ has another
path of the same length and weight in which the contribution of
$G_\X$ is a loop.

Assume $e_\X$ is not a  loop, i.e., $\tgm_{1,2} = e_\X$,  then
$\gm_{1,2}$ has the form
$$\gm_{1,2} = (\rho_1)^2  \circ e_\X \circ (\rho_2)^2, $$
where $\rho_1$ and $\rho_2$ are loops. If $w(\rho_1) > w(\rho_2)$
(resp. $w(\rho_2) > w(\rho_1)$) then the path $(\rho_1)^3  \circ
e_\X \circ \rho_2$ (resp. $e_\X \circ (\rho_2)^4 $) would have a
higher weight than $\gm_{1,2}$ has -- a contradiction. (Note that
that a loop can be contributed equivalently either by $G_X$ or
$G_Y$.) Thus $w(\rho_1) = w(\rho_2)$, and hence $e_\X \circ
(\rho_2)^4$ and $(\rho_1)^3 \circ e_\X \circ \rho_2$ are paths of
the same weight as $\gm_{1,2}$ in which $G_\X$ contributes a loop.
\end{proof}

\begin{remark} Although we have proven Theorem \ref{prop:2x2} for
the explicit identity \eqref{eq:explicit2X2} of diagonally equivalent matrices, the same proof also
holds for any identity of $2 \times 2$ triangular tropical matrices given in the general form as in
\eqref{eq:iduniv2}.
\end{remark}

\begin{corollary}[{\cite[Theorem~3.6]{IzhakianMargolisIdentity}}] Any matrices $A,B \in U_2(\Trop)$
satisfy the  semigroup identity  \begin{equation}\label{eq:2X2} A
B^2 A \ A B \ A B^2 A \ds =  A B^2 A \  BA  \ A B^2 A.
\end{equation}
\end{corollary}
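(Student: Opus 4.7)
My plan is to deduce the corollary directly from Theorem \ref{prop:2x2} by a one-step substitution. The key observation is that for any upper triangular matrices $A,B \in U_2(\Trop)$, the products $AB$ and $BA$ are again upper triangular and share the same main diagonal: both have $(i,i)$-entry equal to $a_{i,i} b_{i,i}$ (this is precisely the remark following equation \eqref{eq:dig}). In particular, $AB \dsim BA$.

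The plan is then to set $X := AB$ and $Y := BA$. Since $X, Y \in U_2(\Trop)$ and $X \dsim Y$, Theorem \ref{prop:2x2} yields
\begin{equation*}
XY \cdot \X \cdot XY \ds = \ XY \cdot \Y \cdot XY.
\end{equation*}
Computing $XY = AB \cdot BA = AB^2A$ and substituting back, the left-hand side becomes $AB^2A \cdot AB \cdot AB^2A$ and the right-hand side becomes $AB^2A \cdot BA \cdot AB^2A$, which is exactly \eqref{eq:2X2}.

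There is no genuine obstacle here: the corollary is a pure substitution instance of the theorem, and the only nontrivial input needed beyond the theorem itself is the elementary fact that diagonals of $AB$ and $BA$ coincide for triangular matrices. The conceptual content is that passing from the diagonally-equivalent-matrix identity to a genuine identity on the full monoid $U_2(\Trop)$ is achieved for free by the product trick $X = AB$, $Y = BA$, since this substitution automatically enforces diagonal equivalence while producing a nontrivial balanced $2$-variable identity in $A,B$. I would present the proof as a two-line computation, invoking the triangular-product diagonal observation and then Theorem \ref{prop:2x2}.
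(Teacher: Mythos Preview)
Your proposal is correct and follows exactly the paper's own approach: the paper's proof is the single line ``Apply Theorem \ref{prop:2x2} for $X := AB$ and $Y := BA$ as in \eqref{eq:subtitiue}.'' Your additional remarks about why $AB \dsim BA$ and the explicit computation $XY = AB^2A$ simply spell out what the paper leaves implicit.
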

\begin{proof}
Apply Theorem \ref{prop:2x2} for $X := AB$ and  $Y := BA$ as in
\eqref{eq:subtitiue}.
\end{proof}
One easily sees that \eqref{eq:2X2} is a 2-variable uniformly  balanced
identity of length $10$. Moreover, by
\cite{IzhakianMargolisIdentity}, we know that this is an identity
of minimal length that $ U_2(\Trop)$ satisfies.

\subsection{The general case}

We now turn to prove the existence of a semigroup identity in the general case of $n \times n$  triangular tropical matrices, first generalizing Theorem
\ref{prop:2x2} to identities of the Form \eqref{eq:iduniv2} for $n
\times n$ diagonally equivalent triangular matrices, which later provides the proof for all $n \times n $  triangular matrices.

\begin{remark}\label{rmk:n-1} Viewing the entries of a product of $n\times n$ triangular tropical  matrices as colored paths in the associated digraph, cf. \S\ref{sec:mathGr}, one gets only paths containing simple subpaths of length $< n$. Therefore, concerning identities of the Form $\eqref{eq:iduniv2}$,  applied to $n \times n$ triangular matrices, it is enough to
implement an identity  $ \Id_{(C,P,n-1)}$ constructed by using $(n-1)$-power words.
\end{remark}

Given two $n\times n$ triangular matrices $X \dsim Y$,  let $Z \in
\uT$ be the product concatenation
\begin{equation}\label{eq:oneProd} \setZ :=  \setL  \ds    \X
\setR,   \qquad \setL =  \setR  = \tlw_{(C,P,n-1)},
\end{equation}
% and let
% \begin{equation}\label{eq:oneProW} W :=  X Y \ \X \ X Y.
% \end{equation}
%
with $C := \{ X ,Y\}$ and $P := \{ 1,2\}.$ Recall that the
notation $\X$ is used to mark the specific instance of the matrix
$X$ in the expression, although it is just the same matrix as $X$,
and that  $\setZ$ stands for   the product concatenation (i.e., a formal word) whose product result is the matrix   $Z = (z_{i,j})$
  given in \eqref{eq:oneProd}.

 In the view of \S\ref{sec:mathGr}, the $(i,j)$-entry $z_{i,j}$ of $Z = (z_{i,j})$ corresponds
to a colored path $ \gm_{i,j}$ of maximal weight and length
$\ell(\gm_{i,j}) = \ell(\setZ)$ from $i$ to $j$ in the associated
digraph $\grph_\setZ$ of  $\setZ$. Then, for $j > i $,  cf. Remark~
\ref{rmk:acyclic}, the simple colored subpath $\tgm_{i,j} \subset
\gm_{i,j}$  from $i$ to $j$ is of length $ \leq  j-i \leq n-1$, and thus
$\gm_{i,j}$ contains exactly $\ell(\setZ) - \ell(\tgm_{i,j})$
loops.

\begin{remark} The matrix product concatenations $\setL$ and  $\setR$
in  \eqref{eq:oneProd}  have been taken to be $\tlw_{(C,P,n-1)}$ --
the $(n-1)$-power word of $C$ and $P$ for which every member  of
$\wset_{n-1} [C,P]$ is a factor. This allows us to deal also with
cases in which the involved matrices have diagonal entries $\zero$
which means that some vertices in the associated digraphs are not
adjunct to a loop.

In the sequel exposition, when working with matrices whose
diagonal entries are all nonzero, we may replace $\tlw_{(C,P,n-1)}$
by a word for which every member of $\wset_{n-1} [C,P]$ is a subword (and not necessarily a factor), to possibly obtain shorter semigroup identities.
\end{remark}

Given a simple path $\tgm_{i,j}$ from $i$ to $j$, we write
$W(\tgm_{i,j})$ for the word recorded uniquely by the  coloring of
$\tgm_{i,j}$. In particular $\ell(W(\tgm_{i,j})) = \ell
(\tgm_{i,j})$. \pSkip

The next lemma plays a central role in this paper.

\begin{lemma}\label{lem:main}  Suppose $\setZ $ is as in \eqref{eq:oneProd}, where $X,Y\in \uT$. Let $\gm_{i,j}$, where $i <  j$,
be a colored path  of maximal weight  in  $\grph_\setZ$ for  which
the contribution of $\grph_\X$  is
 a non-loop edge $e_\X$. Then  $\grph_\setZ$ has another colored path of the same length and weight  in which
 the contribution of  $\grph_\X$ is a loop.
\end{lemma}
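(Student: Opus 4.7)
The plan is to construct an alternative colored $(i,j)$-path of the same length and weight in $\grph_\setZ$ in which the contribution of $\grph_\X$ at the middle position $m = \ell(\setL) + 1$ is a loop rather than the non-loop edge $e_\X = (a,b)$. The strategy is to shift $e_\X$ to a nearby position and exploit the maximality of $\gm_{i,j}$ to force weight preservation. First, I would decompose the simple subpath as $\tgm_{i,j} = \tgm_{i,a}\circ e_\X \circ \tgm_{b,j}$, where $a<b$; in $\gm_{i,j}$, there are indices $\alpha \leq m-1$ and $\beta \geq m+1$ such that the path sits at vertex $a$ during positions $\alpha,\ldots,m-1$ (taking loops), uses $e_\X$ at position $m$, and sits at vertex $b$ during $m+1,\ldots,\beta$ (taking loops again). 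The simple edges of $\tgm_{i,a}$ lie at positions in $\setL$ before $\alpha$, and those of $\tgm_{b,j}$ lie at positions in $\setR$ after $\beta$.

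Next, I would analyze single-position shifts of $e_\X$. If the matrix $M_{m+1}$ at position $m+1$ (the first letter of $\setR = \tlw_{(C,P,n-1)}$) equals $X$, then the edge $(a,b)$ remains available at $m+1$ with weight $x_{a,b}$; shifting $e_\X$ from $m$ to $m+1$ turns position $m$ into a loop at $a$ contributed by $\grph_\X$ (weight $x_{a,a}$) and position $m+1$ into the simple edge $e_\X$. Since position $m+1$ was originally a loop at $b$ of weight $x_{b,b}$, the net weight change is $\Delta_+ = x_{a,a}-x_{b,b}$, and the maximality of $\gm_{i,j}$ forces $\Delta_+ \leq 0$. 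Symmetrically, when $M_{m-1} = X$, the shift of $e_\X$ to position $m-1$ yields $\Delta_- = x_{b,b}-x_{a,a}\leq 0$. Combining gives $x_{a,a} = x_{b,b}$, so both shifts are weight-preserving and provide the desired alternative path with a loop contributed by $\grph_\X$ at position $m$.

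The main obstacle is the case where $M_{m-1}$ or $M_{m+1}$ equals $Y$ rather than $X$: shifting $e_\X$ to such a position requires the $Y$-edge $(a,b)$ (i.e., $y_{a,b}\neq\zero$), and the weight change then involves both $x_{\cdot,\cdot}$ and $y_{\cdot,\cdot}$ diagonal entries, so equality is not immediately forced. To handle this case I would exploit the structural property of $\tlw_{(C,P,n-1)}$, namely that every word in $\wset_{n-1}[\{X,Y\},\{1,2\}]$ appears as a factor, which guarantees $X$-positions in both $\setL$ and $\setR$ at controlled small distances from $m$. Telescoping the inequalities $\Delta_{\pm r}\leq 0$ across shifts of $e_\X$ by several positions---summing them and cancelling the intermediate loop-weight differences $(M_q)_{a,a}-(M_q)_{b,b}$---forces some shift to attain $\Delta = 0$, yielding the required weight-preserving alternative path. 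Degenerate subcases where some relevant diagonal or off-diagonal entry equals $\zero$ are handled by observing that such entries would make $\gm_{i,j}$ itself sub-maximal unless the path is forced into a rigid structure readable directly from $\setL$ and $\setR$.
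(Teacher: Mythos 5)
Your overall strategy --- perturb the maximal path so that the middle position contributes a loop, and use maximality in both directions to force the perturbation to be weight-neutral --- is the same idea that drives the paper's proof, but your local, one-edge-at-a-time implementation has genuine gaps that the paper's global argument is specifically designed to avoid. First, your decomposition presupposes that the positions adjacent to the middle slot carry loops (your $\alpha\le m-1$ and $\beta\ge m+1$), i.e., that the path lingers at $a$ just before $e_\X$ and at $b$ just after. Nothing forces this: if the heaviest loops sit at vertices far from $a$ and $b$ (say at the endpoints $i$ and $j$), the maximal path may traverse the entire simple subpath in consecutive positions, so that positions $m\pm1$ carry non-loop edges and the swap of $e_\X$ with an adjacent loop is simply undefined. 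Worse, if $x_{a,a}=y_{a,a}=\zero$ (the paper explicitly allows $\zero$ diagonal entries), vertex $a$ admits no loop at all, and no amount of local shifting can park a loop at vertex $a$ in position $m$; one must relocate the whole simple subpath, which is exactly the move you do not make. Second, your telescoping step in the mixed $X/Y$ case is not closed: writing $u=x_{a,a}-x_{b,b}$ and $v=y_{a,a}-y_{b,b}$, a right shift to the nearest $X$ at distance $r$ gives $u+(r-1)v\le 0$ and a left shift at distance $r'$ gives $-u-(r'-1)v\le 0$; these sum to $(r-r')v\le 0$, which yields $\Delta=0$ only when $r=r'$. What rescues this is the hypothesis $X\dsim Y$, which is part of the setup of \eqref{eq:oneProd} and which the paper flags as the very reason the shifts are legal (loops at a given vertex have the same weight whether contributed by $G_X$ or $G_Y$); it gives $u=v$ and closes the telescope, but you never invoke it.

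The paper's proof sidesteps all of this by a global rearrangement: it strips off the maximal simple prefix and suffix of $\gm_{i,j}$, observes that the word of the remaining simple part has length $<n-1$ and hence, by the defining factor property of the $(n-1)$-power word $\tlw_{(C,P,n-1)}$, can be realized inside the leftover of $\setL$, inside the leftover of $\setR$, or split across both, and then fills every freed position --- including the middle $\X$ --- with powers of the single heaviest loop $\rho_{\max}$; maximality is used once to conclude the rearranged path has the same weight. To repair your argument you would need to (i) justify or drop the adjacency-of-loops assumption, (ii) invoke $X\dsim Y$ explicitly in the telescoping, and (iii) handle the no-loop-at-$a$-or-$b$ case by moving the entire simple subpath, at which point you have essentially reconstructed the paper's proof.
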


The proof of the lemma is quite technical, thus, before proving it formally, let us outline its major idea. Given a path $\gm_{i,j}$ from $i$ to $j$, it contains a unique simple subpath $\tgm_{i,j}$ from $i$ to $j$ which corresponds to a subword $W$ of  $\setZ$; all the other edges of  $\gm_{i,j}$  are loops. We want to show that if the edge $e_\X$ contributed by $G_\X$ appears in $\tgm_{i,j}$, then $e_\X$ can be excluded from $\tgm_{i,j}$ by ``shifting'' $\tgm_{i,j}$ in one of three ways without changing its weight: either shifting  $\tgm_{i,j}$ to the left or to the right such that $W$ becomes a factor of $\setL$ or $\setR$ respectively, or by writing $\tgm_{i,j}$ as a composition $\tgm_{i,k} \circ \tgm_{k,j}$, and $W = W_1 W_2$ correspondingly, and shifting $\tgm_{i,k}$ to the left and $\tgm_{k,j}$ to  right such that both $W_1$ and $W_2$ are factors of  $\setL$ and  $\setR$ respectively. As a consequence of these  ``shifts'', which are possible since on each vertex $G_X$ and $G_Y$ have loops of the same weight,  the contribution of  $G_\X$ to  $\gm_{i,j}$ becomes a loop, while the edge $e_\X$ is replaced in $\tgm_{i,j}$ by $e_X$, contributed by another $G_X$.

\begin{proof}[Proof of Lemma \ref{lem:main}]

Let  $m  = \ell(\setR) = \ell(\setL)$,  since $\setL=  \setR  = \tlw_{(C,P,n-1)}$ -- an $(n-1)$-power word, then $m \geq n$, and thus $\ell(\gm_{i,j}) = 2m +1$.  Let $\tlm =
\ell(\tgm_{i,j})$. In particular,  $\tlm < n$ since
   $\tgm_{i,j}$ is a simple path in
$\grph_\setZ$ -- an acyclic digraph on $n$ vertices, and thus $\tlm < m$. Recall that by hypothesis
$e_\X \in \tgm_{i,j}$ . The proof is delivered by cases, determined by
the structure of the path~$\gm_{i,j}$.

Write \begin{equation}\label{eq:1} \gm_{i,j} = \tgm_{i,s} \circ
(\rho_s)^{p_s} \circ  \hgm_{s,t} \circ (\rho_t)^{p_t} \circ
\tgm_{t,j}, \qquad i \leq s < t \leq j,
\end{equation}
where $\tgm_{i,s}$ (resp. $\tgm_{t,j}$) is the maximal simple path
(could be empty) appearing as the prefix (resp. suffix) of $
\gm_{i,j}$,  $\rho_s$ and $\rho_t$ are loops which must exist due to length considerations, and $\hgm_{s,t}$ is a subpath (needs not be simple).

Therefore,  $e_\X$ does
not belong to $\tgm_{i,s}$ nor $\tgm_{t,j}$, by length considerations, and thus
$\ell(\tgm_{i,s}), \ell(\tgm_{t,j}) < \tlm-1 $ and hence $p_s, p_t
> 0.$

Define  $\setF := \word{ \tgm_{i,s}} $ and  $\setG := \word{
\tgm_{t,j}}$ to be the words (whose terms are generic matrices) determined
by the coloring of the simple paths $ \tgm_{i,s}$ and $ \tgm_{t,j},$ and set
$\setLp := \sres{\setL}{\setF}$ and $\setRp :=
\pres{\setR}{\setG}.$ In other words $\setLp$ and $\setRp$ are the
words obtained from $\setL$ and $\setR$  after removing respectively the
initial and the terminal words (which in this case are factors) corresponding to the simple paths $
\tgm_{i,s}$ and $ \tgm_{t,j}$.

Let \begin{equation}\label{eq:2} \mu_{s,t} =  (\rho_s)^{p_s} \circ
\hgm_{s,t} \circ (\rho_t)^{p_t}, \qquad  s < t ,
\end{equation}
be the subpath of $\gm_{i,j}$, given by its intermediate
non-simple part according to \eqref{eq:1}, and let $\tmu_{s,t}$ be
the simple subpath contained in  $\mu_{s,t}.$ Define $\setH :=
\word{\tmu_{s,t}}$ -- the matrix product concatenation, realized
as a subword, corresponding to the coloring of the path $\tmu_{s,t}$.

We claim that  $ \grph_{\setLp}$ contains a path similar to
$\tmu_{s,t}$ and $\grph_{\setRp}$ contains a path similar
$\tmu_{s,t}$. It is enough to show that $\setH$ is a subword of
$\setLp$.  Indeed, $\setF \setH \subseteq \setL $ by word
construction, where $\setF$ is the prefix of $\setL$  by
hypotheses, thus $\setH \subseteq \setLp$ is a subword of
$\setLp$. The case of $\tmu_{s,t} \subset \grph_{\setRp}$ is dual.

Let $\rho_{\max}$ denote the loop of maximal weight in
$\mu_{s,t}$, then we have the following possible cases:

\begin{description}
    \item[I. $\rho_s = \rho_{\max}$] Then, there is a path $ \mu'_{s,t} = (\rho_{\max})^{q_s} \circ  \tmu_{s,t} \circ (\rho_t)^{q_t}
    $ with  $\tmu_{s,t} \subset \grph_{\setRp}$ and $q_s > \ell(\setLp)$, such that $w(\mu'_{s,t}) = w(\mu_{s,t})$, since otherwise
    we would get a contradiction to the maximality of weight of~$\mu_{s,t}$. Thus
    $e'_\X = \rho_s = \rho_{\max}$ -- a loop. \pSkip

    \item[II. $\rho_t = \rho_{\max}$] Then, there is a path  $ \mu'_{s,t} = (\rho_{s})^{q_s} \circ  \tmu_{s,t} \circ (\rho_{\max})^{q_t}
    $, with $\tgm_{s,t} \subset \grph_{\setLp}$ and $q_t > \ell(\setRp)$, such that $w(\mu'_{s,t}) = w(\mu_{s,t})$, since otherwise
    we would get a contradiction to the maximality of weight of~$\mu_{s,t}$. Thus
    $e'_\X = \rho_t = \rho_{\max}$ -- a loop. \pSkip

    \item[III. $\rho_{\max} \in \hgm_{s,t}$] Then, there is a path $$ \mu'_{a,t} = (\rho_{s})^{q_s} \circ  \tmu_{s,k}
    \circ (\rho_{\max})^{q_k} \circ  \tmu_{k,t}  \circ (\rho_{q_t})^{q_t}
    , \qquad s < k < t,$$ with   $\tmu_{s,k} \subset \grph_{\setLp}$ and  $\tmu_{k,t} \subset
    \grph_{\setRp}$ such that  $\tmu_{s,t} = \tmu_{s,k} \circ \tmu_{k,t}
    $. Thus  $w(\mu'_{s,t}) = w(\mu_{s,t})$, since otherwise
    we would get a contradiction to the maximality of weight of~$\mu_{s,t}$.
    Hence
    $\rho_k = \rho_{\max}$ and $e'_\X = \rho_k$ -- a loop.
\end{description}
Therefore,  in all the above cases we  get that $e'_\X$ is a loop
in $\mu'_{s,t}$ -- a path in $G_{\setLp \X \setRp}$. Then,  concatenate the simple paths $\tgm_{i,s} $ and $\tgm_{t,j}$
\begin{equation*}\label{eq:3} \gm'_{i,j} = \tgm_{i,s} \circ
\mu'_{s,t} \circ \tgm_{t,j}, \qquad i \leq s < t \leq j,
\end{equation*}
to obtain another path in $G_{\setL \X \setR}$  for which the contribution of  $G_\X$ is a
loop, as desired.
\end{proof}

\begin{theorem}\label{thm:simDia}Any two diagonally equivalent matrices $X,Y \in \uT$, i.e., $X \dsim Y$, satisfy the identities $\Id_{(C,P,n-1)}$ of the Form~
\eqref{eq:iduniv2}, with $x = X$ and $y =Y$.
\end{theorem}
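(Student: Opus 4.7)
The plan is to reduce the statement to an entrywise comparison of the matrix products $Z$ and $Z'$ corresponding to the words $\setZ := \setL\,\X\,\setR$ and $\setZp := \setL\,\Y\,\setR$, where $\setL = \setR = \tlw_{(C,P,n-1)}$. For the diagonal entries the equality $z_{i,i} = z'_{i,i}$ is immediate from the hypothesis $X \dsim Y$, since every loop $(i,i)$ in $\grph_\X$ has the same weight as the corresponding loop in $\grph_\Y$ and a maximal-weight colored walk from $i$ to $i$ of the prescribed length only concatenates loops at vertex $i$. So the real content is the off-diagonal case $i < j$.

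For $i < j$, I would invoke Remark \ref{rmk:acyclic} together with the correspondence recalled in Remark \ref{rmk:simplePathWords}: the entry $z_{i,j}$ equals the weight of a maximal-weight colored path $\gm_{i,j}$ of length $2m+1$ in $\grph_\setZ$, and this path decomposes into a simple subpath $\tgm_{i,j}$ of length $\leq n-1$, padded by loops. There are two cases for the unique edge $e_\X$ contributed by $\grph_\X$. If $e_\X$ is already a loop, then because $X \dsim Y$ one can replace $e_\X$ by the corresponding loop $e_\Y$ of $\grph_\Y$ without changing the weight, exhibiting a path in $\grph_\setZp$ with the same weight, hence $z'_{i,j} \geq z_{i,j}$. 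By symmetry (running the same argument with the roles of $X$ and $Y$ swapped) one obtains $z_{i,j} = z'_{i,j}$ in this case.

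If $e_\X$ is not a loop, this is exactly the hypothesis of Lemma \ref{lem:main}, which produces a different colored path in $\grph_\setZ$ of the same length and weight in which the contribution of $\grph_\X$ is a loop. Applying the previous paragraph to this new path then gives the desired matching path in $\grph_\setZp$, so again $z'_{i,j} \geq z_{i,j}$; symmetry closes the argument. The only subtle point to verify is that Lemma \ref{lem:main} genuinely applies to our situation: this is where it matters that $\setL = \setR = \tlw_{(C,P,n-1)}$ is an $(n-1)$-power word, since the lemma's construction requires that every simple subword of length $< n$ that could arise as the coloring of a simple subpath of length $\tlm < n$ actually appears as a factor of both $\setL$ and $\setR$ (this is what legitimates the shifts in cases I--III of the lemma).

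The main obstacle I anticipate is therefore not in the argument itself but in being careful about the ``shift'' bookkeeping -- verifying that $\tgm_{i,s}$ and $\tgm_{t,j}$ (and the relevant subwords $\setH \subseteq \setLp$, $\setH \subseteq \setRp$) are really factors and not merely subwords, so that the required loop-padded alternative paths live in $\grph_\setZ$ and $\grph_\setZp$. Once Lemma \ref{lem:main} is quoted with this caveat, the proof collapses to the case-by-case entrywise matching outlined above and the theorem follows by the digraph--matrix correspondence.
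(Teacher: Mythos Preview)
Your proposal is correct and follows essentially the same approach as the paper's proof: both reduce to an entrywise comparison, dispose of the diagonal case trivially via $X \dsim Y$, and for $i<j$ invoke Lemma~\ref{lem:main} to arrange that the contribution of $\grph_\X$ to a maximal-weight path is a loop, which may then be swapped for the corresponding $\grph_\Y$-loop. The only cosmetic difference is that the paper treats the case $u_{i,j}=-\infty$ by a separate contradiction argument, whereas you absorb it into the symmetry step; your formulation is slightly cleaner, but the underlying idea is identical.
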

\begin{proof} Write  $X = (x_{i,j})$ and $Y = (y_{i,j})$.  Let $U =(u_{i,j})$ and $V =
(v_{i,j})$ be the matrix products determined respectively by the
left and the right words of the identity  \eqref{eq:iduniv2}. We
need to show that $u_{i,j} = v_{i,j}$ for every $i \leq j $. (The case of $j > i$ is trivial since $x_{i,j} = y_{i,j} = -\infty$, for any  $j > i$, and hence $u_{i,j} = v_{i,j} = -\infty$.)

 It is easy to see that $u_{i,i} = v_{i,i}$ for every $i =1,\dots, n$.
Assume now that  $i< j$, and consider the associated colored digraphs
$\grph_\setU$ and $\grph_\setV$ with matrix products $U$ and~$V$,
realized as words $\setU$ and~ $\setV$  as given by
\eqref{eq:iduniv2}.  Assume first that $u_{i,j} \neq -\infty$, then the value of the entry $u_{i,j}$ corresponds
to a colored path $\gm_{i,j}$ from $i$ to $j$ of maximal weight
and of length $\ell(\setU) $ in the digraph $\grph_\setU$ . By
Lemma~\ref{lem:main} we may assume that the contribution of
$\grph_\X$ to $\gm_{i,j}$ is a loop $\rho_\X$, but then
$\grph_\setV$ also contains  a similar colored path $\gm'_{i,j}$ in
which the contribution of $\grph_\Y$
 is also a loop $\rho_\Y$,  replacing  $\rho_\X$, which by hypothesis have the same
weight, i.e., $w(\rho_\X) = w(\rho_\Y)$.  Dually, the same argument also holds for a path in $\grph_\setV$ in which the contribution of $G_\Y$ is a loop.

Suppose now that $u_{i,j} = -\infty$, and assume that $v_{i,j} \neq -\infty.$  This means that there exists a colored path $\gm'_{i,j}$ from $i$ to $j$ in $\grph_\setV$, and in particular, by Lemma \ref{lem:main}, a path in which the contribution of $\grph_\Y$ is a loop $\rho_\Y$.  But then, by the above dual argument, $\grph_\setU$ also has a similar path, and thus $u_{i,j} \neq -\infty$ -- a contradiction.

Putting all together, we have $u_{i,j} = v_{i,j}$
for every $i,j$.
%
%The proof is completed by lugging $X := \chX \chY$ and $Y := \chY
% \chX$, to obtain \eqref{eq:iduniv2}.
\end{proof}

\begin{example} Assume   $X \dsim Y$, and set  $x = X $ and $y =  Y$.
%Using the $n$-power words  given in
%Example~\ref{exmp:2-word}, extended by the role of \eqref{eq:idunivRole1} we have the following identities.

\begin{enumerate} \eroman
  \item  If $ X, Y \in U_3(\Trop)$ then they  satisfy the identity
\begin{equation}\label{eq:id3x3} \Id_{(C,P,2)}: \quad
     \ll2 \ds{\underline{x}}  \ll2\ds =  \ll2 \ds{\underline{y}}
     \ll2
%
%     \overline{y} x^2y^2x \ds{\underline{x}}  \overline{y} x^2y^2x \ds =  \overline{y} x^2y^2x % \ds{\underline{y}}
%     \overline{y} x^2y^2x
     \ .
\end{equation}
% in Example~\ref{exmp:2-word:2}.(i),
\pSkip
  \item
  f $ X, Y \in U_4(\Trop)$ then they  satisfy the identity
\begin{equation}\label{eq:id4x4} \Id_{(C,P,3)}: \quad
     \l3 \ds{\underline{x}}  \l3 \ds =   \l3 \ds{\underline{y}}
      \l3 \ .
\end{equation} \end{enumerate}
(To preserve the exponent $\set2$ of the identities, we use the power words of Example \ref{exmp:2-word} with an additional instance, denoted as $\overline{y}$, of $y$ given by the rule of \eqref{eq:idunivRole1}.)
\end{example}

\begin{theorem}\label{thm:matU}The submonoid $\uT \subset \mT$ of
upper triangular  tropical   matrices satisfies the identities $\widecheck{\Id}_{(C,P,n-1)}$ of
the Form \eqref{eq:iduniv3}, which we recall is \eqref{eq:iduniv2}
with $x = X = AB$ and $y = Y =  BA$,  for $n \times n $ generic matrices $A, B$.\end{theorem}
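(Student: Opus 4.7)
The plan is to derive Theorem \ref{thm:matU} as a direct consequence of Theorem \ref{thm:simDia} by specializing $X := AB$ and $Y := BA$ for arbitrary $A, B \in \uT$. The only nontrivial hypothesis to verify is the diagonal equivalence $X \dsim Y$; the identity \eqref{eq:iduniv3} in the variables $\chx, \chy$ standing for $A, B$ will then arise from the identity \eqref{eq:iduniv2} in $X, Y$ by reading the latter as a word equation and expanding each letter according to the substitution scheme \eqref{eq:subtitiue}.

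First I would verify that $AB \dsim BA$ for every $A, B \in \uT$. This is the observation recorded immediately after \eqref{eq:dig}: since $A$ and $B$ are upper triangular, the only contribution to the $(i,i)$-entry of either product comes from the diagonal entries $a_{i,i}$ and $b_{i,i}$, whose tropical product is symmetric in the two factors. Hence $X := AB$ and $Y := BA$ both lie in $\uT$ and satisfy $X \dsim Y$ for any choice of $A$ and $B$, so the hypothesis of Theorem \ref{thm:simDia} is met unconditionally.

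Applying Theorem \ref{thm:simDia} to this pair yields the identity $\Id_{(C,P,n-1)}$ of the Form \eqref{eq:iduniv2}, namely $\tlw'_{(C,P,n-1)} \ds \X \tlw'_{(C,P,n-1)} = \tlw'_{(C,P,n-1)} \ds \Y \tlw'_{(C,P,n-1)}$ with $C = \{X, Y\}$. Now substitute $X = AB = \chx \chy$ and $Y = BA = \chy \chx$ throughout this equation, as prescribed in \eqref{eq:subtitiue}. This is exactly the substitution that converts an identity of the Form \eqref{eq:iduniv2} into the balanced identity $\widecheck{\Id}_{(\chC,P,n-1)}$ of the Form \eqref{eq:iduniv3} in the letters $\chx, \chy$. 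Since $A, B \in \uT$ were arbitrary, the submonoid $\uT$ satisfies $\widecheck{\Id}_{(\chC,P,n-1)}$ as claimed.

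The heavy combinatorial work — showing that two diagonally equivalent triangular matrices admit the unbalanced identity, via colored-path arguments in the associated digraph $\grph_\setZ$ (Lemma \ref{lem:main}) — has already been absorbed into Theorem \ref{thm:simDia}. Hence the present theorem is essentially a corollary, with no remaining technical obstacle: the only additional ingredient is the elementary fact that any pair of the form $AB, BA$ with $A, B \in \uT$ automatically shares the same diagonal, which lifts the identity from the restricted class of diagonally equivalent pairs to a universal identity on the full submonoid $\uT$.
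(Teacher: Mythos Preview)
Your proposal is correct and follows essentially the same approach as the paper: verify that $AB \dsim BA$ for all $A,B \in \uT$ and then invoke Theorem~\ref{thm:simDia}. The paper's own proof is just these two steps stated in two sentences, so your write-up is simply a more detailed rendering of the same argument.
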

\begin{proof} It easy to verify that for any triangular matrices $A,
B\in \uT$ the matrix products  $X = AB$ and $Y = BA$ are
diagonally equivalent. The proof is then  completed by Theorem
\ref{thm:simDia}.
\end{proof}

\begin{example} Set  $x = AB $ and $y =  BA$.

\begin{enumerate} \eroman
  \item  The monoid $U_3(\Trop)$ of $3
\times 3$ triangular tropical matrices satisfies the identity \eqref{eq:id3x3}.
\pSkip
  \item
  The monoid $U_4(\Trop)$ of $4
\times 4$ triangular tropical matrices admits the identity \eqref{eq:id4x4}.
\end{enumerate}
 \end{example}

%\subsection{Minimal identity} Our next goal is to show that, depending on the minimality of $\tlw'_{(C,P,n)}$, the
% identity \eqref{eq:iduniv2} is a minimal identity of $\uT$; as
% before we  first deal with the case of diagonally equivalent
% matrices.
%
%
%\begin{theorem}\label{thm:simDiaMin} In the case of diagonally equivalent matrices, for a suitable minimal $n$-power word
%$\tlw_{(C,P,n)}$, the  identity of the Form~ \eqref{eq:iduniv2} is
%minimal.
%\end{theorem}
%
%\begin{proof} Assume that minimal $n$-power word
%$\tlw_{(C,P,n)}$.
%\end{proof}
%
%
%\begin{theorem}\label{thm:simDiaMin} For a suitable minimal $n$-power word
%$\tlw'_{(C,P,n)}$, the  identity of the Form~ \eqref{eq:iduniv3}
%is a  minimal identity of $\uT$.
%\end{theorem}
%
%\begin{proof}
%\end{proof}
%

\section{Identity  length: An upper bound}\label{sec:5}
In the previous section we proved the existence of  semigroup
identities of the Form \eqref{eq:iduniv3},  satisfied by~ $\uT$,
we now discuss the length of this identity, providing a very naive
upper bound.
%
%
%\subsection{Fibonacci type bound}

The very well known Fibonacci sequence $\fib_n$ is defined by the
recursive  relation
\begin{equation}\label{eq:Fib} \fib_n := \fib_{n-1} + \fib_{n-2},
\qquad \text {for every  } n \geq 2 ,
\end{equation} where
$\fib_0 = 0  $ and $\fib_1 = 1$, and has the closed formula
(known as Binet's Fibonacci number formula):
\begin{equation}\label{eq:FinClosed} \fib_n = \frac{(1 +
\sqrt{5})^n - (1 - \sqrt{5})^n} {2^n \sqrt{5}} \ .
\end{equation}
Therefore, we see that the Fibonacci number $2 \fib_n$ gives the number of
elements in $\wset_n [C,P]$, i.e., all possible factors of length
$n$, for the case of $C := \{ x,y\}$ and $P := \{ 1,2\}$. (The multiplier $2$ stands for the two possibilities for starting a sequence, either with  $x$ or $y$.)

A naive construction of an $n$-power word $\tlw_{(C,P,n)}$, i.e.,
by concatenating all factors in $\wset_{n} [C,P]$, gives us the upper bound
$$\ell(\tlw_{(C,P,n)}) \leq 2 (n  +1) \fib_{n}.$$ (The multiplier of $n+1$,
stands for the length of a factor with a possible additional letter
between sequential factors, aiming to preserve the exponent $\set2$
property.) Thus, considering the refinement given by $x = \chx \chy$ and
$y = \chy \chx$,  we have
\begin{equation}\label{eq:upper} \ell(\Id_{(C,P,n)}) \leq
8(n+1)\fib_{n} + 2.
\end{equation}

Obtusely, this rough upper bound assumes that the factors $\wset_{n} [C,P]$
 do not overlap in $\tlw_{(C,P,n)}$. Dealing with possible overlap
 factors, one can reduce the multiplier $n+1$ in
 \eqref{eq:upper}, and hence can  improve further this upper bound.

Recall that, in view of  Remark~\ref{rmk:n-1}, for $n\times n$ triangular matrices it is enough to consider identities~ $\Id_{(C,P,n-1)}$, for which the upper bound is then smaller, that is  $$\ell(\Id_{(C,P,n-1)}) \leq
8n\fib_{n-1} + 2.$$

%
%\subsection{A tied upper bound }
%
%$$ \psmatrix[colsep=16mm,rowsep=6mm,mnode=circle]
%&xy \\
%xx & & yy\\
%&yx \ncline[nodesep=5pt]{<->}{1,2}{1,3}^{1}
%% \ncline{<->}{3,2}{1,2}>{3}
%% \ncline{<->}{3,2}{3,3}_{2}
%% \ncarc[arcangle=-30]{<->}{3,3}{1,3}<{6}
%\ncarc[arcangle=-30]{->}{1,2}{3,2}<{x}
%\ncarc[arcangle=-30]{->}{3,2}{1,2}>{y}
%\ncarc[arcangle=30]{->}{2,1}{1,2}<{y}
%\ncarc[arcangle=30]{->}{1,2}{2,3}>{y}
%\ncarc[arcangle=30]{->}{3,2}{2,1}<{x}
%\ncarc[arcangle=30]{->}{2,3}{3,2}>{x}
%% \ncline{<-}{2,1}{1,2}<{10}
%%\ncline{->}{2,1}{3,2}_{5} \ncline{<->}{3,3}{2,1}>{7}
%%\ncline{<->}{1,3}{3,3}<{4}
%\endpsmatrix
%$$

\section{Remarks and open problems}

A natural question arisen from our identity construction in
\S\ref{sec:3.2} is about the minimality of semigroup identities
admitted by $\uT$.

\begin{conjecture} When $\tlw'_{(C,P,n-1)}$ is a minimal $n$-power word
 of  $P$ and $C$, then the identity
$\widecheck{\Id}_{(C,P,n-1)}$ in~\eqref{eq:iduniv3} is a minimal
semigroup identity admitted by  $\uT.$
\end{conjecture}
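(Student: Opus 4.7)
The plan is to establish minimality by matching Theorem \ref{thm:matU} (which already gives the upper bound) against a combinatorially forced lower bound on the length of any identity admitted by $\uT$. Concretely, I will argue that every nontrivial identity $\Pi : u = v$ satisfied by $\uT$ has $\ell(\Pi) \geq \ell(\widecheck{\Id}_{(C,P,n-1)})$ whenever $\tlw'_{(C,P,n-1)}$ is a minimal $(n{-}1)$-power word of $C$ and $P$.

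First, I would reduce to a manageable class of candidates. Since $\alpha I \in \uT$ for every $\alpha \in \Real$, any identity admitted by $\uT$ is balanced (as observed in \S\ref{sec:3.2}). By Theorem \ref{thm:refine}, one may further restrict to $2$-variable identities with exponent set $\set2$; any candidate minimal identity outside this class would, after specialization of all but two variables to suitable $\alpha I$'s, yield either a shorter identity in the class or a trivial equation, so the minimum over all of $\uT$'s identities is attained in this restricted class. Thus we may take $\Pi : u = v$ to be a balanced identity with $\cont(\Pi) = \{\chx, \chy\}$ and $\exp(\Pi) \subseteq \set2$, and the task is to show $\ell(u) = \ell(v) \geq \ell(\widecheck{\Id}_{(C,P,n-1)})$.

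Second, the combinatorial core of the argument would construct, for every $w \in \wset_{n-1}[\{X,Y\},\set2]$, a witness pair $A_w, B_w \in \uT$ of triangular matrices with algebraically generic weights such that, setting $X = A_wB_w$ and $Y = B_wA_w$, the $(1,n)$-entry of the matrix product corresponding to $u$ is realized by a unique maximum-weight properly colored path in $\grph_{\setU}$ whose simple-subpath spine is exactly $w$; this requires arranging loop and edge weights so that no alternative colored path of equal total weight exists. The enforced equality $u_{1,n}(A_w, B_w) = v_{1,n}(A_w, B_w)$ then forces $v$, read through the refinement substitution \eqref{eq:subtitiue}, to contain a colored path with the same spine, and hence the word $w$ to occur as a factor at a compatible position. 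Running $w$ over all of $\wset_{n-1}[\{X,Y\},\set2]$ shows that the ``$X$/$Y$-collapsed'' version of both $u$ and $v$ must contain every length-$(n-1)$ word of exponent set $\set2$ as a factor, i.e., must itself be an $(n{-}1)$-power word. By the minimality hypothesis on $\tlw'_{(C,P,n-1)}$, this forces $\ell(u), \ell(v) \geq \ell(\tlw'_{(C,P,n-1)}) + c$, where $c$ accounts for the middle factor $\chx\chy$ (resp.\ $\chy\chx$), matching $\ell(\widecheck{\Id}_{(C,P,n-1)})$ exactly.

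The main obstacle, and the reason this is offered as a conjecture rather than a theorem, is the genericity/uniqueness step inside the witness construction: for each $w$ one must simultaneously pin down the unique maximum-weight colored path to have spine $w$ and ensure the forced factor position in $v$ is consistent across all choices of $w$ (rather than merely showing each $w$ appears somewhere). A parallel subtlety is ruling out the possibility that an identity of smaller exponent set, or one exploiting clever algebraic overlap of factors along an $(n{-}1)$-power word, could evade the factor-by-factor lower bound; overcoming this presumably requires a refined positional analysis of the colored-path spines of maximum-weight paths, perhaps by tracking the induced partial order on factor occurrences in $u$ and $v$ forced by varying $w$ simultaneously.
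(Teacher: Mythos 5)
You cannot be checked against the paper here, because the paper offers no proof of this statement: it appears in the closing section ``Remarks and open problems'' explicitly as a \emph{conjecture}, i.e.\ as an open question about minimality that the author does not resolve. What the paper actually proves (Theorems \ref{thm:simDia} and \ref{thm:matU}) is only the upper-bound half, namely that $\uT$ \emph{satisfies} $\widecheck{\Id}_{(C,P,n-1)}$; the matching lower bound on the length of an arbitrary identity of $\uT$ is precisely what is missing, and your proposal does not close it either --- as you yourself concede in your final paragraph. So this cannot be graded as a correct proof taking a different route; it is an unproved strategy for an open problem.

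Beyond the admitted gap in the witness construction, two steps of the sketch would fail as stated. First, the reduction to balanced $2$-variable identities of exponent $\set2$ is not length-preserving in either direction: Theorem \ref{thm:refine} produces the refined identity by substituting $y_1y_2$ and $y_2y_1$ for each letter, which at least doubles the length, and conversely specializing surplus variables to $\alpha I$ may collapse a candidate identity to a trivial one rather than to a shorter nontrivial one. Hence the minimum over all identities of $\uT$ need not be attained in your restricted class, and a minimal identity could a priori have a different content or exponent set. Second, even granting the factor-forcing argument, your conclusion $\ell(u)\geq \ell(\tlw'_{(C,P,n-1)})+c$ with $c$ accounting only for the middle block $\chx\chy$ does not match the target: each side of $\widecheck{\Id}_{(C,P,n-1)}$ has the sandwich form $\chw'\,\chx\chy\,\chw'$ and therefore length $2\,\ell(\chw'_{(C,P,n-1)})+2$, so your lower bound is short by an entire copy of the power word. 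Forcing \emph{two} essentially disjoint occurrences of an $(n{-}1)$-power word (one on each side of the distinguished middle letter) would require a positional argument considerably stronger than ``every $w\in\wset_{n-1}[C,P]$ occurs somewhere as a factor,'' and nothing in the sketch supplies it.
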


%\subsection{Conjectures}
The results of this paper, and those of
\cite{IzhakianMargolisIdentity}, lead us to the conjecture, which
has already been conjectured earlier in
\cite{IzhakianMargolisIdentity},  that
\begin{conjecture} Also
the monoid $M_{n}(\Trop)$ of $n \times n $ tropical matrices satisfies a nontrivial semigroup identity for all~$n$.
\end{conjecture}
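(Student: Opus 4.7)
The plan is to reduce the full-matrix conjecture to the triangular identity established in Theorem~\ref{thm:matU} by exploiting two structural features of $\mT$: the strongly connected component (SCC) decomposition of the digraph $\grph_A$ of a tropical matrix, and the eventual cyclicity of tropical matrix powers (Cuninghame--Green, Gaubert). Every $A \in \mT$ is permutation-similar to a block upper-triangular matrix whose diagonal blocks $A_{1},\dots,A_{r}$ are irreducible (each $\grph_{A_i}$ is strongly connected) and whose off-diagonal blocks record the bridge edges of the acyclic condensation of $\grph_A$. For any irreducible block $B$, cyclicity yields integers $K(n), p(n)$ depending only on $n$ such that $B^{k+p} = \lambda(B)^{p} B^{k}$ for every $k \geq K(n)$, where $\lambda(B)$ is the maximum cycle mean of $\grph_B$.

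First I would handle the irreducible case: for irreducible $A$, cyclicity forces the sequence $A^{K}, A^{K+p}, A^{K+2p}, \dots$ to form a tropical arithmetic progression, which can be used to show that two sufficiently long words $u(X,Y)$ and $v(X,Y)$ produce the same product matrix whenever their ``critical walks'' coincide in $\grph_\setW$. Next, for the reducible case, each entry $W(X,Y)_{i,j}$ corresponds to a maximum-weight properly colored walk in $\grph_\setW$ that traverses a sequence of SCCs joined by bridges of the condensation. One would then apply the triangular identity \eqref{eq:iduniv3} at the level of the condensation DAG, while using cyclicity within each SCC to guarantee that the loop and multiloop contributions on the two sides of the resulting identity agree entrywise. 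The critical-walk analysis of Lemma~\ref{lem:main} should carry over to ``SCC-internal'' loops by the same shifting argument, once the period $p$ has been absorbed into the identity's exponents.

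The hardest step is the \emph{uniformity obstruction}: the SCC decomposition of $\grph_\setW$ depends on the generic choice of $X, Y$, not only on the word $W$, so no single permutation simultaneously triangularizes all substitutions. To address this I would build an identity whose structure is invariant under the action of $S_n$ on row/column indices, roughly by symmetrizing the triangular identity of Theorem~\ref{thm:matU} with conjugated variables across all $n!$ permutations of the index set. A naive length estimate combining the Fibonacci bound of Section~\ref{sec:5} with $\operatorname{lcm}(1,\dots,n)$ (controlling the common period $p$) and $n!$ (for the symmetrization) gives an astronomical but finite upper bound on the identity length. Whether this construction truly yields an identity on all of $\mT$, rather than merely on a Zariski-dense subset on which the SCC pattern is fixed, is the central difficulty; overcoming it likely requires input from the asymptotic spectral theory of tropical matrix semigroups (e.g.\ Simon's and Akian--Gaubert--Walsh's results on the stabilization of tropical matrix products) beyond the purely graph-combinatorial tools used in the triangular case.
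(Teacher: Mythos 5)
The statement you are trying to prove is stated in the paper only as a conjecture: the author offers no proof, and explicitly remarks that the only known case at the time is $n=2$ (via \cite{IzhakianMargolisIdentity}). So there is no proof in the paper to compare against, and your text must be judged on its own terms --- and on those terms it is a research programme, not a proof. You yourself flag the decisive gap (``Whether this construction truly yields an identity on all of $\mT$ \dots is the central difficulty''), and that gap is real. Two concrete failures: first, the cyclicity theorem $B^{k+p}=\lambda(B)^{p}B^{k}$ governs powers of a \emph{single} irreducible matrix; a product of two distinct matrices $X,Y$ taken along an arbitrary word has no analogous periodicity, so the ``tropical arithmetic progression'' step in your irreducible case is unsupported. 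More importantly, the entire force of Lemma~\ref{lem:main} is that in the triangular (acyclic) setting the only cycles are loops at a single vertex, and a loop can be re-sited anywhere in the word because \emph{every} letter's digraph carries a loop of the same weight at that vertex. Once an SCC contains a genuine cycle of length $\geq 2$, sliding that cycle to a different position in the word changes which colors (which of $X$ or $Y$) must supply its edges, and the weights need not match; the shifting argument does not ``carry over'' as you assert.

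Second, your proposed cure for the uniformity obstruction --- symmetrizing over all $n!$ permutations with ``conjugated variables'' --- is not an operation available inside the free semigroup $\varX^{+}$. Conjugating by permutation matrices produces an identity involving fixed constant matrices, which is no longer a semigroup identity in the sense of \S\ref{sec:3.2}; and no single word can be arranged to evaluate, for every substitution, as if the substituted matrices had been simultaneously block-triangularized, precisely because the SCC pattern of $\grph_{\setW}$ varies with the substitution. So the reduction to Theorem~\ref{thm:matU} does not go through as described. The conjecture was in fact settled years after this paper by quite different arguments, which is consistent with the obstructions above being essential rather than technical.
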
 \noindent
(The conjecture has been proven in \cite[Theorem 3.9]{IzhakianMargolisIdentity} for the case of $n =2.$)

Another reason for
conjecturing this is that every finite subsemigroup of
$M_n(\Trop)$ has polynomial growth \cite{Gaub, Simon}. In
particular, the free semigroup on 2 generators is not isomorphic
to a subsemigroup of $M_n(\Trop)$. While Shneerson \cite{Shn} has
given examples of polynomial growth semigroups that do not satisfy
any nontrivial identity (no such example exists for groups by
Gromov's Theorem \cite{Grom}), we conjecture that this is not the
case for $M_{n}(\Trop)$.

%Another natural question arises from our result: Does the identity
%\eqref{eq:00} minimal?

% \subsection{Supertropical matrices}
%\begin{remark}
%Observing the proof of Lemma \ref{lem:main}, the central lemma of
%this paper, one sees that the results of this paper can be easily
%generalized to the submonoid $U_n(F)$ of triangular supertropical
%matrices~\cite{IzhakianRowen2008Matrices}, i.e., matrices which
%defined over a supertropical semifield $F$, cf.
%\cite{zur05TropicalAlgebra,IzhakianRowen2007SuperTropical}.
%Namely, the monoid $U_n(F)$ also admits a nontrivial semigroup
%identity of the Form \eqref{eq:iduniv3}.
%\end{remark}

%******************************* Reference *********************************
\bibliographystyle{abbrv}
%\bibliography{../../bib/dfz}

\begin{thebibliography}{1}



\bibitem{Adjan}
S.~I. Adjan.
\newblock {\em Defining relations and algorithmic problems for groups and
  semigroups}.
\newblock Number~85. Proceeding of the Steklov Institute of Mathematics,
  American Mathematical Society, 1967.

\bibitem{ABG}
M.~Akian, R.~Bapat,  S.~Gaubert.
\newblock Max-plus algebra,
\newblock In: Hogben, L., Brualdi, R., Greenbaum, A., Mathias, R. (eds.)
{\em Handbook of Linear Algebra}. Chapman and Hall, London, 2006.

\bibitem{Butk}
P. Butkovi\v{c}.  \newblock {\em Max-linear systems: theory and
algorithms}. Springer Monographs in Mathematics. Springer-Verlag
London Ltd., London, 2010

\bibitem{Gaub}
S.~Gaubert, R.D.~Katz.
\newblock {Reachability problems for products of matrices in
semirings}.
\newblock {\em Int. J. of Alg. and Comp.},  16(3):603--627,
2006.



\bibitem{Grom} M.~Gromov.
\newblock {Groups of polynomial growth and expanding maps}.
\newblock {\em Publ. Math IHES}, 53:53--73, 1985.



\bibitem{IMS}
I.~Itenberg, G.~Mikhalkin,  E.~Shustin.
\newblock {\em Tropical algebraic geometry}, vol.~35.
\newblock Birkhauser, 2007.
\newblock Oberwolfach seminars.

%\bibitem{zur05TropicalAlgebra}
%Z.~Izhakian.
%\newblock Tropical arithmetic and tropical matrix algebra.
%\newblock {\em Commun. Alg.},  37(4):1445--1468,
%2009.
%\newblock (preprint at arXiv:math.AG/0505458).

\bibitem{IzhakianMargolisIdentity}
Z.~Izhakian, S.~W. Margolis.
\newblock Semigroup identities in the monoid of 2-by-2 tropical
matrices.
\newblock {\em Semigroup Furom},  80(2):191--218, 2010.
%\newblock (Perprint at arXiv:0902.1174)

%\bibitem{IR1} Z.~Izhakian, L.~Rowen.
%\newblock  The tropical rank of a tropical matrix. {\em Commun. Alg.},
% 37(11):3912--3927, 2009.
%
%\bibitem{IR2}
%Z.~Izhakian, L.~Rowen.
%\newblock {Supertropical matrix algebra}.
%\newblock  {\em Israel J. Math.}, {182}:1, 383--424, 2011.
%%\newblock (Preprint at arXiv:0806.1178.)
%
%
%\bibitem{IR3}
%Z.~Izhakian, L.~Rowen.
%\newblock {Supertropical matrix algebra II: Solving tropical
%equations}.
%\newblock {\em  Israel J. Math.},  {186}:1, 69--97, 2011.
%%\newblock (Preprint at arXiv:0902.2159.)
%
%\bibitem{IR4}
%Z.~Izhakian, L.~Rowen.
%\newblock {Supertropical matrix algebra III: Powers of matrices and generalized
%eigenspaces.}  {\em J. Alg.}, {341} :1, 125--149, 2011.
%%\newblock (Preprint at arXiv:1008.0023.)
%
%

%\bibitem{IzhakianRowen2007SuperTropical}
%Z.~Izhakian, L.~Rowen,
%\newblock {Supertropical algebra}.
%\newblock {\em   Adv. in Math.}, 225(4):2222--2286, 2010.


%\bibitem{IzhakianRowen2008Matrices}
%Z.~Izhakian, L.~Rowen.
%\newblock Supertropical matrix algebra.
%\newblock {\em Israel J. of Math.},
%  182(1):383--424, 2011.

%\bibitem{Lallement}
%G.~Lallement.
%\newblock {\em Semigroups and Combinatorial Applications}.
%\newblock John Wiley \& Sons, Inc., New York, NY, USA, 1979.
%
%\bibitem{Lawler76}
%E.~L. Lawler.
%\newblock {\em Combinatorial Optimization: Networks and Matroids}.
%\newblock Holt, Rinehart, and Winston, 1976.
%
%\bibitem{Okninski}
%J.Okninski,
%\newblock {\em Semigroups of Matrices}
%\newblock World Scientific, Singapore, 1998.


\bibitem{pin98}
J.-E. Pin.
\newblock Tropical semirings.
\newblock {\em Cambridge Univ. Press, Cambridge}, 11:50--69, 1998.
% \newblock Publ. Neton Inst. 11, Cambridge Univ.


\bibitem{RST}
J.~Richter-Gebert, B.~Sturmfels, T.~Theobald.
\newblock First steps in tropical geometry.
\newblock {\em Idempotent mathematics and mathematical physics},
  289--317, 2005.
\newblock Contemp. Math., Amer. Math. Soc., Providence, RI, 377.

%\bibitem{Row}
%L.~Rowen. {\em Graduate algebra: A noncommutative view}.
%% \newblock {\em Semigroups and Combinatorial Applications}.
%\newblock American Mathematical Society, 2008.

\bibitem{SV} L.N. Shevrin, M.V. Volkov, \newblock {Identities of semigroups}, {\em Soviet
Math. Izv. Vyz.}, 29(11):1--64, 1985.  ({\em Izv. Vuzov. Math.}
29(11)3--47, 1985.)

\bibitem{Shn}
L.~Shneerson.
\newblock{Identities in finitely generated semigroups of polynomial
growth}.
\newblock {\em J. Alg.,}  154(1): 67--85, 1993.

\bibitem{Simon}
I.~Simon,
\newblock {Recognizable sets with multiplicities in the tropical
semiring}, in MFCS 88, M. Chytil, L. Janiga, V. Koubek (eds.),
{\em Lecture Notes in Computer Science}, No. 324, 107--120, Springer,
1988.

\end{thebibliography}

\end{document}